\newtheorem{theorem}{Theorem}[section]
\newtheorem{lemma}[theorem]{Lemma}
\newtheorem{definition}[theorem]{Definition}
\newtheorem{corollary}[theorem]{Corollary}
\newtheorem{conjecture}[theorem]{Conjecture}
\def\keywords{\vspace{.5em}
{\textit{Keywords}:\,\relax%
}}
\begin{document}

\title{Hamiltonian cycles in annular decomposable Barnette graphs}

\author{Saptarshi Bej\thanks{{Author for correspondence:            saptarshibej24@gmail.com, saptarshi.bej@uni-rostock.de}
	} }
\affil{Institute of Computer Science,
		University of Rostock, Germany}

\date{}

\maketitle

\begin{abstract}
Barnette's conjecture is an unsolved problem in graph theory. The problem states that every $3$-regular (cubic), $3$-connected, planar, bipartite (Barnette) graph is Hamiltonian. Partial results have been derived with restrictions on number of vertices, several properties of face-partitions and dual graphs of Barnette graphs while some studies focus just on structural characterizations of Barnette graphs. Noting that Spider web graphs are a subclass of \textit{Annular Decomposable Barnette} (ADB graphs) graphs and are Hamiltonian, we study ADB graphs and their annular-connected subclass (ADB-AC graphs). We show that ADB-AC graphs can be generated from the smallest Barnette graph ($B_0$) using recursive edge operations. We derive several conditions assuring the existence of Hamiltonian cycles in ADB-AC graphs without imposing restrictions on number of vertices, face size or any other constraints on the face partitions. We show that there can be two types of annuli in ADB-AC graphs, \textit{ring annuli} and \textit{block annuli}. Our main result is, ADB-AC graphs having non singular sequences of ring annuli are Hamiltonian.   
\end{abstract}

\keywords{Barnette conjecture, Hamiltonian graphs, Annular Decomposition, Planar cubic graphs, Bipartite polyhedral graphs}
 
\section{Introduction}
Barnette's conjecture is a long standing unsolved problem in mathematics proposed by David W. Barnette in 1969. The conjecture states:
\begin{conjecture}\label{barcon}
Every $3$-regular, $3$-connected planar bipartite graph is Hamiltonian. We will henceforth use the term Barnette graphs to refer to $3$-regular, $3$-connected planar bipartite graphs.
\end{conjecture}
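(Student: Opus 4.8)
The plan is to prove the conjecture by structural induction on a generation scheme that produces every Barnette graph from the smallest one, $B_0$ (the cube $Q_3$), while carrying Hamiltonicity as an invariant. First I would establish a \emph{generation theorem}: every $3$-regular, $3$-connected, planar, bipartite graph can be obtained from $B_0$ by a finite sequence of local edge operations that preserve cubicity, $3$-connectivity, planarity, and bipartiteness. The paper already supplies such a theorem for the annular-decomposable, annular-connected (ADB-AC) subclass, so the first genuine task is to enlarge that generation result to the full class --- ideally by showing that an arbitrary Barnette graph admits \emph{some} annular decomposition, and that its annuli can be peeled off one at a time by inverse edge operations down to $B_0$.

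With generation in hand, the base case is immediate: $B_0 = Q_3$ is Hamiltonian by inspection. For the inductive step I would show that each edge operation preserves Hamiltonicity, using the annular structure to localise the argument. Concretely, given a Hamiltonian cycle $H$ in the smaller graph, I would reroute $H$ through the newly inserted annulus: the cycle enters and leaves the annulus through a bounded number of attachment edges, and inside the annulus one has enough freedom (because every face is even) to splice in the new vertices without creating a second component. For \emph{ring annuli} appearing in non-singular sequences this rerouting is exactly the content of the paper's main result, so those layers are already under control.

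The hard part --- and the reason the conjecture remains open --- is that this clean picture breaks down for the remaining cases. First, \emph{block annuli} and \emph{singular sequences of ring annuli} obstruct the straightforward splice: the attachment edges can be forced into an alignment or parity that leaves no valid rerouting, so the inductive hypothesis does not transfer locally and one must track a stronger invariant (for instance, the set of admissible pairs of entry/exit edges on each annulus boundary) rather than mere Hamiltonicity. Second, and more seriously, it is not known that \emph{every} Barnette graph is annular decomposable at all; a general cubic, $3$-connected, planar, bipartite graph need not separate into nested annuli, so the very decomposition that drives the induction may fail to exist.

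Consequently the decisive step is the passage from the ADB-AC subclass to arbitrary Barnette graphs: one must either prove that every Barnette graph admits an annular (or suitably generalised) decomposition, or else replace the annular scaffolding by a complete set of Hamiltonicity-preserving reduction operations valid for the whole class. I expect this to be the main obstacle, since it is precisely here that the local rerouting arguments lose their footing and where previous attacks on the conjecture have stalled. The present paper settles the non-singular ring-annulus case, and in my plan that result is the engine of the inductive step; closing the gap to the full conjecture amounts to extending both the generation theorem and the rerouting invariant past the singular and block-annulus obstructions, which I regard as the substantive remaining difficulty.
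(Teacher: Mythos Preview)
The statement you are attempting to prove is Conjecture~\ref{barcon}, Barnette's conjecture itself. The paper does \emph{not} prove it; it is stated as an open conjecture, and the paper's contributions are partial results (Theorems~\ref{twocasethm}--\ref{arbit}) for the ADB-AC subclass under additional hypotheses. So there is no ``paper's own proof'' to compare against, and any complete proof would resolve a famous open problem.

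Beyond that, your inductive scheme has a concrete flaw that the paper already flags. You propose to carry Hamiltonicity through the generation operations. The generation result for ADB-AC graphs (Theorem~\ref{construct}) uses only $\alpha$-operations, and the paper states explicitly, just after Lemma~\ref{edgecon}, that while $\beta$-operations preserve Hamiltonicity, $\alpha$-operations do \emph{not}: ``this is not true if we apply the operation described in Figure~\ref{demo2}(a) \ldots\ any arbitrary number of times.'' Hence the step ``for the inductive step I would show that each edge operation preserves Hamiltonicity'' fails at the outset for precisely the operations that build the class you want to induct over. This is not a matter of tracking a stronger invariant through block annuli or singular ring sequences; the basic local move already breaks the invariant. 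The paper circumvents this not by induction on $\alpha$-operations but by constructing Hamiltonian cycles globally via the pyramid, ring, and pyramid-ring colouring strategies, which is a fundamentally different mechanism from the local rerouting you describe.

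You correctly identify the two further obstructions (singular ring sequences and non-annular Barnette graphs), but these are downstream of the primary failure above. As written, the proposal is a reasonable survey of where the difficulties lie, but it is not a proof and cannot be completed along the stated inductive line without a new idea replacing the false premise that the generating operations preserve Hamiltonicity.
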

The development towards the Barnette's conjecture started as early as 1880, when P.G. Tait proposed a weaker statement:
\begin{conjecture}\label{taitcon}
Every $3$-regular, $3$-connected planar graph is Hamiltonian.
\end{conjecture}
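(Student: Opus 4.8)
The plan is first to recognise the final statement as \emph{Tait's conjecture}: that every $3$-regular, $3$-connected planar graph is Hamiltonian. The most natural line of attack is the one Tait himself had in mind, namely to exploit the tight link between Hamiltonicity and edge-colourings of cubic planar graphs. Given a Hamiltonian cycle in a cubic graph, one would $2$-colour the cycle edges alternately and assign a third colour to the complementary perfect matching, producing a proper $3$-edge-colouring; by Tait's theorem this is equivalent to a proper $4$-colouring of the faces. I would therefore try to construct the Hamiltonian cycle directly by induction on the number of vertices, seeking a local reduction (contracting an edge, collapsing a face, or applying a $\Delta$--$Y$ type transformation) that yields a smaller $3$-connected cubic planar graph whose Hamiltonian cycle can be lifted back across the operation.

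A second, complementary approach would be generative. By Steinitz's theorem the $3$-connected planar graphs are exactly the graphs of convex polyhedra, and there are standard operation sets (subdivisions together with vertex splittings that preserve cubicity and $3$-connectivity) from which every such graph can be built starting from a small base. One would then try to carry the predicate \emph{``admits a Hamiltonian cycle''} as an invariant along every generating sequence, extending the cycle across each elementary step.

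The step I expect to be the main obstacle is, in this case, fatal rather than merely technical: \emph{the statement is false}. Both the colouring-based and the generative strategies must break down, because there exist $3$-connected cubic planar graphs with no Hamiltonian cycle whatsoever, the first being Tutte's $46$-vertex graph (W.\,T.\ Tutte, 1946). Consequently no general lifting lemma can hold, and the invariant ``remains Hamiltonian'' cannot survive every reduction, so neither programme can be completed. This refutation is precisely the historical reason the conjecture was strengthened by restricting to the \emph{bipartite} case, giving Barnette's conjecture (Conjecture~\ref{barcon}) and motivating the structural, subclass-by-subclass treatment of Barnette graphs pursued in the remainder of this paper.
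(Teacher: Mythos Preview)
Your assessment is correct and aligns with the paper's own treatment: Conjecture~\ref{taitcon} is stated purely as historical background, not as a result to be proved, and the paper immediately notes that Tutte's $46$-vertex graph refutes it. There is no proof in the paper to compare against, and your recognition that the statement is false (and hence that any purported proof programme must fail) is exactly the point the paper makes before moving on to Barnette's strengthened bipartite version.
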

This conjecture was ultimately disproved by W.T. Tutte, when he constructed a couter-example to the conjecture, a graph built on $46$ vertices \cite{tut_counter}. With further attempts to find a smaller counterexamples for Tait's conjecture, Holton and McKay found a counterexample on $38$ vertices \cite{smallcounter}. However, none of these counterexamples found were bipartite. Tutte thus proposed a conjecture stating:
\begin{conjecture}\label{tuttcon}
Every $3$-regular, $3$-connected bipartite graph is Hamiltonian.
\end{conjecture}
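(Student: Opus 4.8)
The statement as worded is Tutte's conjecture, obtained from Tait's conjecture (Conjecture~\ref{taitcon}) by trading planarity for bipartiteness, and I do not expect it to hold---indeed it is false---so my plan is not to prove it but to exhibit a counterexample. The reason for pessimism is structural: planarity is the only hypothesis that, through the face-tracing machinery behind Tait-type arguments, globally couples the local edge choices around a cubic graph; once it is dropped, bipartiteness alone is far too weak to force a Hamilton cycle. Bipartiteness only guarantees balanced colour classes and the absence of odd cycles, and neither obstructs the \emph{trapping} configurations that non-Hamiltonian cubic graphs exploit. Accordingly I would search for a $3$-regular, $3$-connected, bipartite, non-Hamiltonian graph, which (being necessarily non-planar) is exactly the kind of object Barnette's conjecture (Conjecture~\ref{barcon}) later rules out by reinstating planarity.

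First I would fix a rigid bipartite gadget: a small bipartite subgraph joined to the rest of the graph through exactly three edges (a $3$-edge-cut) in such a way that, whenever a Hamilton cycle of the whole graph passes through it, the gadget forces a prescribed pairing of its three boundary edges. The classical realization is Horton's $96$-vertex graph, assembled from copies of a Petersen-derived block; later refinements (the Ellingham--Horton graphs on $78$ and $54$ vertices, and Georges's graph on $50$ vertices) shrink the example while preserving all four required properties. I would present one such graph explicitly---its vertex set, its edge set, and an explicit $2$-colouring exhibiting the bipartition---rather than re-derive the family from scratch.

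Next I would verify the three \emph{easy} properties. Three-regularity follows by inspecting the degree sequence; bipartiteness follows from the exhibited $2$-colouring (equivalently, from checking that the two colour classes together cover every edge); and $3$-connectivity I would establish from the block structure, since each gadget is internally $3$-connected and gluing blocks along $3$-edge-cuts preserves the absence of any $1$- or $2$-vertex cut. Each of these is a finite, mechanical check once the graph is fixed.

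The hard part, and where the whole argument lives, is proving non-Hamiltonicity. The plan is a forced-edge and parity analysis across the $3$-edge-cuts that separate the gadgets. Any cycle crosses a $3$-edge-cut an even number of times, so a Hamilton cycle uses exactly two of each cut's three edges; this forces the cycle to traverse each block as a Hamilton path between the two terminals selected by those two edges. The core of the proof is then a finite case analysis showing that a single block admits such internal Hamilton paths only for certain pairings of its terminals, after which one propagates these local constraints around the assembled graph to reach a global incompatibility---no consistent choice of cut edges can close up into one spanning cycle. Making this propagation rigorous without falling back on exhaustive machine search is the genuine obstacle, since the incompatibility only emerges from the interaction of several blocks at once. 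Carrying it through refutes the statement, which is precisely the historical reason planarity is restored in Barnette's conjecture.
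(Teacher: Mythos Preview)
Your proposal is correct and aligns with the paper: the statement is Tutte's conjecture, which the paper does not prove but simply notes was disproved by Horton's $96$-vertex counterexample \cite{HORTON}. You go further than the paper by sketching how one would actually verify the counterexample's properties (regularity, bipartiteness, $3$-connectivity, and the forced-edge parity argument for non-Hamiltonicity), whereas the paper merely cites the result in passing as historical motivation for Barnette's conjecture.
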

Interestingly, a counterexample of this conjecture was found in 1976 by J.D. Horton which is a graph on $96$ vertices, known as the Horton graph \cite{HORTON}. The Barnette conjecture was thus proposed by combining Conjecture \ref{taitcon} and Conjecture \ref{tuttcon} and remains unsolved.\par
There are several studies that addressed Conjecture \ref{barcon}. There have been attempts to understand the structure of Barnette graphs in general \cite{Florek, Florek3}. One of the well known approaches is the proof by Holton, Manvel and McKay in 1984, showing that all $3$-regular, $3$-connected planar bipartite graphs with no more than 64 vertices are Hamiltonian \cite{alpha}. In the same paper, they also present an interesting approach to generate all possible Barnette graphs from the smallest Barnette graphs, using simple operations defined by adding small structures to existing graphs. Other attempts have been made by studying the duals of Barnette graphs \cite{LU, FLOREK2, bagh}. We conclude from the existing literature, that most of the studies, aiming to find Hamiltonian subclasses of Barnette graphs, focus on studying the dual graphs or imposing restriction on the faces of Barnette graphs.\par
In this article, we study properties of \textit{Annular Decomposable Barnette} (ADB graphs) graphs, a sublclass of Barnette graphs. The idea of annular graphs is not novel in itself \cite{annular1, annular2}. Another article derives necessary and sufficient conditions when cubic plane graphs have a rectangular-radial drawing \cite{rect_rad}. Interestingly, ADB graphs can be thought of as Barnette graphs that have a rectangular-radial drawing. One related work, that inspired us particularly to study ADB graphs in context to a Hamiltonian problem, is the result that Spider-web graphs have a Hamiltonian cycle excluding any edge in such graphs \cite{spider-web}. The Spider-web graphs can be characterised as ADB-graphs(see Figure 2) in \cite{spider-web}) and thus, can be viewed as a subclass of graphs we have studied. We show how \textit{Annular Decomposable Barnette Annular Connected} (ADB-AC) graphs, a certain subclass of ADB-graphs can be built recursively from the smallest Barnette graph using edge operations. We derive some sufficient conditions under which, ADB-AC graphs are Hamiltonian. In our results we do not impose restrictions on number of vertices, face size or any other constraint on the face partitions. We show that there can be two types of annuli in ADB-AC graphs, \textit{ring annuli} and \textit{block annuli}. Our main result is, ADB-AC graphs having non singular sequences of ring annuli are Hamiltonian.

\section{ Preliminary definitions and notations}
As per the Jordan curve theorem, a non-self-intersecting continuous loop in the 2-D plane has an interior and an exterior. For a planar embedding of a Barnette graph we can view a cycle or face, $C$ in $G$ as a Jordan curve and naturally induce a sense of interior, exterior and a boundary to $C$. Note that a Hamiltonian cycle $H$, if it exists, in a planar embedding of a Barnette graph $G$ would have the following properties:
\begin{enumerate}
    \item The interior of $H$ is path connected
    \item The exterior of $H$ is path connected
    \item All vertices in $G$ lie in the boundary of $H$. 
\end{enumerate}
The idea is illustrated in Figure \ref{demo1}(a).\par

\begin{figure}[ht] 
\centering
\includegraphics[scale=.8]{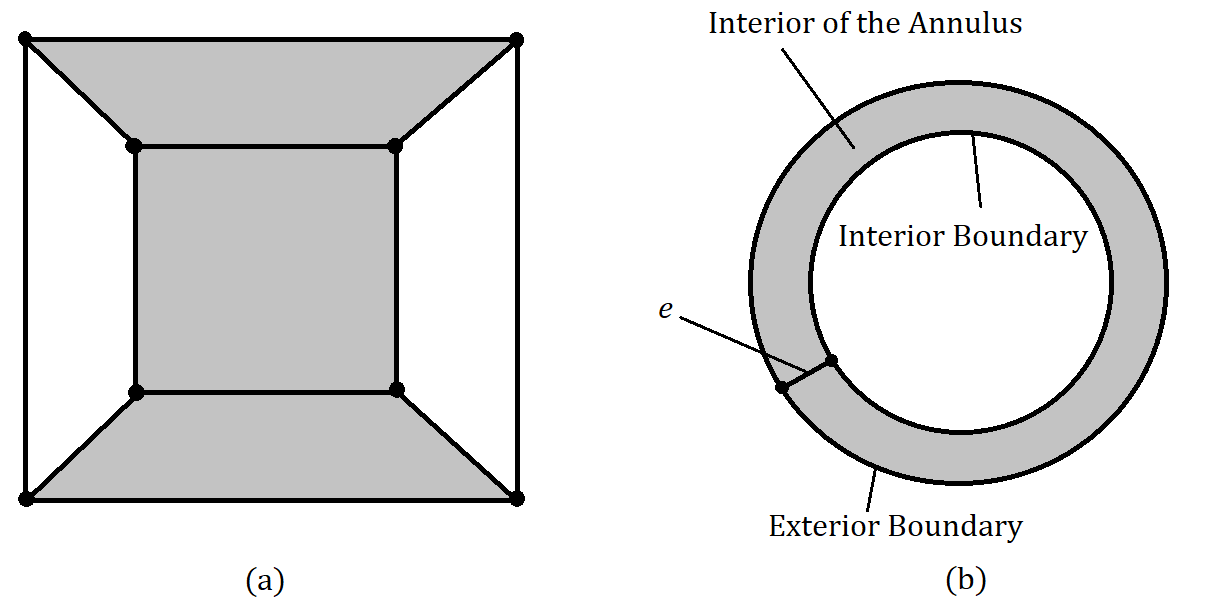}
\caption{(a) A Hamiltonian cycle in the smallest Barnette graph $B_0$ (b) An annulus is defined as two non-intersecting Jordan curves on a 2-D plane, topologically equivalent to two non-intersecting concentric circles. The figure shows an annulus and an edge $e$, lying in the interior of an annulus.}   \label{demo1}
\end{figure}

Note that, two non intersecting Jordan curves on a 2-D plane can always be drawn on the plane such that they are topologically equivalent to two non-intersecting concentric circles. We will refer to such a structure as an \textit{annulus} (See Figure \ref{demo1}(b)). Intuitively, an annulus will have an interior and the exterior boundary once drawn on a plane as shown in Figure \ref{demo1}(b). The enclosed region between the two Jordan curves in a given planar projection of an annulus will be referred to as the \textit{interior of the annulus}. 
\begin{definition}\label{ADB}
We call a Barnette graph $G$ to be Annular Decomposable, if there exists a planar embedding of $G$ such that:
\begin{enumerate}
    \item $G$ can be partitioned into sequence of annuli $A_1,\dots,A_x$
    \item The interior boundary of $A_1$ is a face called interior face $f_i$
    \item The exterior boundary of $A_x$ is a face called exterior face $f_e$
    \item Interior boundary of some annulus $A_i$, $x \geq i>1$, is the exterior boundary for some annulus $A_{i-1}$
    \item Every edge in $v_iv_j\in E(G)$ are such that, either $v_i$ and $v_j$ lie on the boundary of some annulus, or $v_i$ lies in outer boundary of some annulus $A_k$ and $v_j$ lies on the inner boundary of $A_k$ ($1\leq k \leq x$). 
\end{enumerate}
If $G$ can be partitioned into $x$ annuli, we call $G$ an $x$-Annular Decomposable Barnette ($x$-ADB graph) graph.
\end{definition}

\begin{definition}
Let $G$ be a $x$-ADB graph. If in a planar embedding of $G$, $A_1,\dots,A_x$, $f_i$ and $f_e$ are realized, we call it a planar annular embedding of $G$.
\end{definition}

\begin{definition}
Let $G$ be a $x$-ADB graph, $x>2$. If for every annulus $A_k$, $1\leq k \leq x$, deleting all edges (along with respective vertices) in the interior of $A_k$ produces either two separate Barnette graphs or a Barnette graph and a vertex-less Jordan curve, then we call $G$ an ADB annular connected ($x$-ADB-AC graph) graph.
\end{definition}
In this article we prove that ADB-AC graphs having non singular sequences of ring annuli are Hamiltonian.

\begin{lemma}\label{existADBNAC}
There exists ADB graphs that are not annular-connected.
\end{lemma}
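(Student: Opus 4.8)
The goal is to prove Lemma~\ref{existADBNAC}: there exists an ADB graph that is not annular-connected. This is an existence statement, so the natural strategy is to exhibit an explicit construction and verify the relevant properties.

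=== PLAN ===

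\textbf{Strategy.} Since this is purely an existence claim, the plan is to construct one explicit Barnette graph $G$, equip it with a planar annular embedding (thereby showing it is an $x$-ADB graph for some $x>2$), and then exhibit a single annulus $A_k$ whose interior-edge deletion fails the ADB-AC condition — i.e.\ produces something other than ``two Barnette graphs'' or ``a Barnette graph plus a vertex-less Jordan curve.'' The most convenient way to violate the condition is to arrange an annulus $A_k$ so that deleting the edges (and vertices) interior to $A_k$ disconnects one of the two resulting pieces, or leaves a piece that is not $3$-connected (or not cubic, or not bipartite), while still keeping the global graph a legitimate Barnette graph with a valid annular decomposition.

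\textbf{Key steps, in order.} First, I would start from the smallest Barnette graph $B_0$ (the cube $Q_3$, already used in Figure~\ref{demo1}) and build outward: take a planar annular embedding with interior face $f_i$ and exterior face $f_e$, and attach successive annular ``layers'' of $4$-faces (or $6$-faces) around it so the result is manifestly $3$-ADB or $4$-ADB — each added layer is a cyclic band of quadrilaterals joining the previous exterior boundary to a new exterior boundary, which preserves cubicity, bipartiteness and $3$-connectivity. Second, into one of these layers I would deliberately insert a ``pinch'': replace part of a clean band by a local gadget so that the interior boundary of the offending annulus $A_k$, although still a single Jordan curve as required by Definition~\ref{ADB}, has the property that removing the interior of $A_k$ separates the inner part into two components, or produces an inner piece with a $2$-edge-cut. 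Third, I would verify the four global conditions of Definition~\ref{ADB} hold for the embedding (so $G$ really is ADB), check $G$ is $3$-connected (Steinitz / explicit cut-free argument) and bipartite ($2$-colour the faces or vertices explicitly), and finally verify that for this particular $A_k$ the deletion does not yield the configuration demanded of an ADB-AC graph. Providing a labelled figure analogous to Figure~\ref{demo1} would carry most of the verification.

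\textbf{Main obstacle.} The delicate point is keeping $G$ genuinely $3$-connected and bipartite \emph{and} genuinely ADB (all five clauses of Definition~\ref{ADB}, especially clause~5 constraining which vertices an edge may join) while simultaneously engineering an annulus whose interior deletion breaks the ADB-AC property. It is easy to destroy $3$-connectivity or bipartiteness when inserting the ``pinch'' gadget, and it is easy to accidentally make the bad cut correspond to a non-face Jordan curve that disqualifies the annular decomposition itself. So the real work is choosing the gadget small enough to control by hand yet asymmetric enough that at least one annulus fails the connectivity/regularity requirement of the ADB-AC definition; once such a gadget is fixed, the remaining checks are routine finite verifications best delegated to an explicit picture.
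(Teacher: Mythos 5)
Your proposal is a plan for a proof rather than a proof: for an existence statement of this kind the entire mathematical content is the explicit example, and that is precisely the part you defer (``once such a gadget is fixed, the remaining checks are routine''; ``best delegated to an explicit picture''). You correctly identify the obstacle --- engineering an annulus whose interior-deletion violates the ADB-AC condition while keeping the ambient graph cubic, bipartite, $3$-connected and genuinely annular-decomposable --- but you never produce the gadget that overcomes it, so the lemma is not actually established. A secondary weakness is that you concentrate on making a deletion-piece disconnected or not $3$-connected; note that deleting the edges interior to an annulus leaves their boundary endpoints with degree $2$, so after suppressing those vertices the parity of the incident faces can flip, and the easiest failure mode to arrange is loss of \emph{bipartiteness}, which you mention only in passing.

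For comparison, the paper's construction avoids any ad hoc ``pinch'' gadget: it takes an ADB graph $G$ with $x$ annuli and a disjoint copy $H$, and joins the two outer boundaries by four edges (two across a chosen boundary edge of each copy and one from each of the two remaining third-edges at the chosen vertices), producing an ADB graph $K$ with $2x+1$ annuli in which the middle annulus is the gluing band. Deleting the edges interior to the outer annulus of the $G$-copy (now an interior annulus of $K$) produces a piece containing $5$-cycles, hence not bipartite and not Barnette, so $K$ is ADB but not annular-connected (Figure~\ref{counter}). If you want to salvage your approach, you need to write down your layered graph and your gadget explicitly and carry out the finite verification you describe; as it stands the proposal identifies the right target but leaves the decisive step unexecuted.
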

\begin{proof}
Let $G$ be an $n$-ADB graph. Then $H$ be a copy of $G$. Consider two consecutive edges $e_G$ and $e_G'$ that lie in the interior of the outer annuals of $G$, that are adjacent to vertices $v_G^1$ and $v_G^2$ of $G$. Consider the two analogous edges $e_H$ and $e_H'$ in $H$, that are adjacent to vertices $v_H^1$ and $v_H^2$ of $G$. Let $f_G$ and $f_H$ be the edges on the outer boundary of the outer annuli of $G$ and $H$, connecting $v_G^1$ with $v_G^2$ and $v_H^1$ with $v_H^2$ respectively. Note that each of the four vertices, $v_G^1$, $v_G^2$, $v_H^1$ and $v_H^2$, there is yet another edge adjacent to the vertices which we will denote as, $e_{v_G^1}$, $e_{v_G^2}$, $e_{v_H^1}$ and $e_{v_H^2}$ respectively. We now form a new graph $J$ by the following steps:
\begin{enumerate}
    \item Join $f_G$ and $f_H$ by drawing two edges across the existing edges
    \item Join $e_{v_G^1}$ and $e_{v_H^1}$ by drawing one edge across the existing edges
    \item Join $e_{v_G^2}$ and $e_{v_H^2}$ by drawing one edge across the existing edges
\end{enumerate}
The resulting graph $K$ has $2x+1$ annuli and it is easy to see that $K$ is an ADB graph, where $x$ is the number of annuli in $G$. Note that $G$ can be viewed as a subgraph of $K$. Now we delete all the edges in the interior of the outer annulus of $G$, to obtain two graphs $K'$ and $K''$ with $x+1$ and $x-1$ annuli respectively. Note that, $K'$ is not a Barnette graph since it is not bipartite. (See Figure \ref{counter})
\end{proof}

\begin{figure}
\centering
\includegraphics[scale=.7]{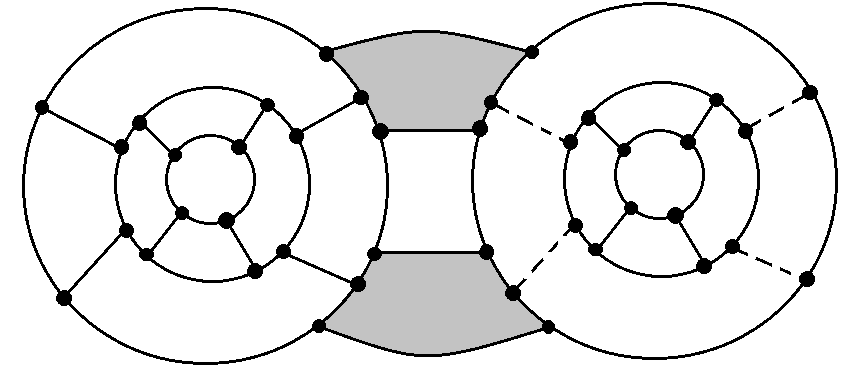}
\caption{The graph shown in the figure is an ADB graph. Note that if we delete the edges in the interior of the outer annulus of the right hand side graph as shown by the dotted lines then the graph is decomposed into a Barnette AD graph (left hand side) and a non-Barnette AD graph (right hand side). The 5-cycles are marked in grey.}   \label{counter}
\end{figure}

\begin{definition}
Let $G$ be ADB-AC graph drawn on a plane. We denote the outer annulus of $G$ as $O_G$.
\end{definition}

\begin{definition}
Let $G$ be ADB-AC graph drawn on a plane. Let $G$ have $x$ annuli. For some $x>k>0$, we define the graph formed by considering restricting $G$ only until the $k$-th annulus, to be the $k$-annular restriction of $G$, denoted by $G_k$. Note that we clarify here that in $O_{G_k}$, we do not consider any vertex adjacent to the edges in the interior of $O_{G_{k+1}}$.

\end{definition}

\section{Main Results}

\begin{lemma}\label{edgecon}
Let $G$ be a Barnette graph. Then $G$ is 2-edge connected.
\end{lemma}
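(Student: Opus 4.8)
The plan is to use the fact that a Barnette graph is connected (indeed $3$-connected) together with the standard observation that a cubic graph cannot contain a bridge, and then to read off $2$-edge-connectivity from ``connected and bridgeless''. So first I would record that since $G$ is $3$-connected it is in particular connected, and that it has at least $4$ vertices; hence proving $2$-edge-connectivity amounts to proving that $G$ has no bridge, i.e.\ that $\lambda(G)\ge 2$.

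The key step is a parity argument. Suppose for contradiction that some edge $e=uv\in E(G)$ is a bridge. Then $G-e$ has exactly two connected components; let $C$ be the component containing $u$. Every vertex of $C$ has the same degree in $C$ as it had in $G$, namely $3$, except for $u$, whose degree has dropped to $2$ after deleting $e$. Hence the degree sum of $C$ equals $3(|V(C)|-1)+2 = 3|V(C)|-1$, which is odd, contradicting the handshake lemma. Therefore $G$ has no bridge, and being connected it is $2$-edge-connected.

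As a remark, I would point out the one-line alternative: invoking Whitney's inequality $\kappa(G)\le\lambda(G)$ between vertex- and edge-connectivity immediately gives $\lambda(G)\ge\kappa(G)=3\ge 2$. I would probably still present the self-contained parity argument in the body, since the rest of the paper is elementary and combinatorial, and keep Whitney only as a footnote.

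I do not expect a genuine obstacle here. The only point deserving a little care is making sure ``$2$-edge-connected'' is interpreted as $\lambda(G)\ge 2$ (connected with no bridge) rather than as some statement about $2$-edge-cuts in a possibly disconnected graph; the parity argument settles exactly the bridgeless part, while the hypothesis of $3$-connectivity supplies connectedness and the lower bound $|V(G)|\ge 4$.
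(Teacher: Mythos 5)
Your main argument has a genuine error. You claim that a cubic graph cannot contain a bridge and support this by computing the degree sum of the component $C$ of $G-e$ containing $u$ as $3(|V(C)|-1)+2=3|V(C)|-1$, ``which is odd.'' But $3|V(C)|-1$ is odd only when $|V(C)|$ is even; when $|V(C)|$ is odd it is even and the handshake lemma is satisfied. Indeed cubic graphs with bridges exist: take two copies of the $7$-edge graph on $\{u,a,b,c,d\}$ with edges $ua,ub,ac,ad,bc,bd,cd$ (so $u$ has degree $2$ and the rest degree $3$) and join the two copies of $u$ by an edge. So ``cubic and connected'' does not imply bridgeless, and your parity step does not close. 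The argument can be repaired either by actually using a hypothesis you currently ignore --- bipartiteness (counting the edges of $C$ from each side of the bipartition gives $3|A\cap C|-1=3|B\cap C|$, a genuine contradiction mod $3$) --- or, more simply, by using $3$-connectivity directly: your footnoted Whitney inequality $\kappa(G)\le\lambda(G)$ gives $\lambda(G)\ge 3$ in one line and is a complete proof. I would promote that remark to the body and either fix or drop the parity argument.

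For comparison, the paper takes a different route entirely: it invokes the Holton--Manvel--McKay generation theorem, by which every Barnette graph is built from the smallest Barnette graph $B_0$ by the two edge operations of Figure \ref{demo2}, and argues by induction that these operations preserve $2$-edge-connectivity. That approach fits the paper's later use of the same generation machinery (Theorem \ref{construct}), but it leans on a nontrivial external theorem; the Whitney-inequality argument you mention in passing is both shorter and self-contained, and is the one you should lead with.
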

\begin{proof}
\begin{figure}[ht] 
\centering
\includegraphics[scale=.8]{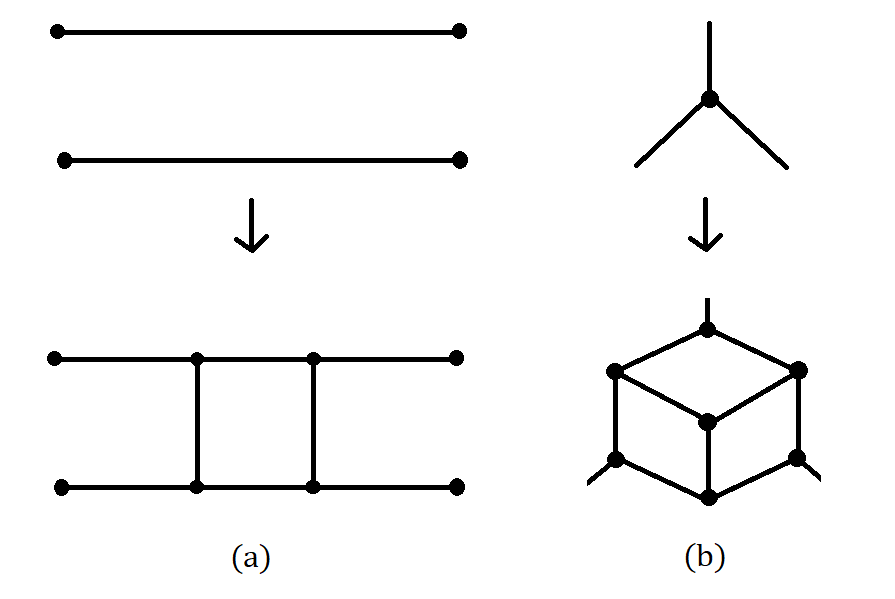}
\caption{Figure showing operations that can be used to build all possible Barnette graphs from the smallest Barnette graph shown in Figure \ref{demo1}-(a), proved by Holton, Manvel and McKay \cite{alpha}.}   \label{demo2}
\end{figure}
Holton, Manvel and McKay proved that every Barnette graph can be constructed from the smallest Barnette graph $B_0$ (shown in Figure \ref{demo1}-(a)), by using the operations shown in Figure \ref{demo2}. Given that $B_0$ is $2$-edge connected, it is easy to see by induction that any Barnette graph $G$, would be $2$-edge connected.
\end{proof}
Note that given a Hamiltonian Barnette Graph $G$, if we apply the operation described in Figure \ref{demo2}(b), which we will henceforth call a $\beta$-operation, any arbitrary number of times, the result is still a Hamiltonian graph. However, this is not true if we apply the operation described in Figure \ref{demo2}(a), which we will henceforth call a $\alpha$-operation, any arbitrary number of times. Later, in Theorem \ref{construct}, we show that ADB-AC graphs can be constructed from $B_0$ by using arbitrary number of $\alpha$-operations only.

\begin{lemma}\label{Hamil_four}
If every annulus of an ADB graph $G$ has a $4$-Cycle, then $G$ is Hamiltonian. Note that this statement is true for any AD-Planar graph.
\end{lemma}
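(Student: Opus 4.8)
The plan is to exhibit the Hamiltonian cycle $H$ explicitly in terms of the given $4$-cycles. Fix a planar annular embedding of $G$ with bounding cycles $C_0=f_i,C_1,\dots ,C_x=f_e$, so that annulus $A_k$ is the region between $C_{k-1}$ and $C_k$ and the only edges interior to $A_k$ are ``spokes'', each joining a vertex of $C_{k-1}$ to a vertex of $C_k$. Since $G$ is cubic, $V(G)=C_0\cup\cdots\cup C_x$, every vertex of an intermediate cycle $C_k$ ($0<k<x$) carries exactly two edges of $C_k$ and one spoke (lying in $A_k$ or in $A_{k+1}$), and every vertex of $C_0$ (resp.\ $C_x$) carries its unique spoke into $A_1$ (resp.\ $A_x$). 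The same degree count shows that any $4$-cycle of $G$ using an edge interior to $A_k$ consists of exactly two spokes $\lambda_k,\rho_k$ of $A_k$ together with one edge $a_k$ of $C_{k-1}$ joining their endpoints $x_k,x_k'$ on $C_{k-1}$ and one edge $b_k$ of $C_k$ joining their endpoints $y_k,y_k'$ on $C_k$; equivalently, $A_k$ has a quadrilateral face. This is the structure I will use (this is the natural meaning of ``$A_k$ has a $4$-cycle'', and it is the only way a $4$-cycle can sit across an annular band).

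Using one such quadrilateral face $Q_k=\{\lambda_k,\rho_k,a_k,b_k\}$ in each annulus, I would define $H$ to be the union of the $2x$ spokes $\lambda_1,\rho_1,\dots ,\lambda_x,\rho_x$, the path $C_0-a_1$, the two paths forming $C_k-\{a_{k+1},b_k\}$ for each $0<k<x$, and the path $C_x-b_x$. Then I would check that $H$ is a Hamiltonian cycle in three steps. First, $H$ spans $V(G)$, since every vertex lies on some $C_k$ and no $C_k$ is completely deleted. Second, $H$ is $2$-regular: a short case analysis, in which one uses that $a_{k+1}$ and $b_k$ are \emph{disjoint} edges of $C_k$ — their endpoints carry spokes pointing in opposite radial directions, hence cannot coincide — so that a vertex on $C_k$ either keeps both its $C_k$-edges and drops its spoke, or (if it is an endpoint of $\lambda_k,\rho_k,\lambda_{k+1}$ or $\rho_{k+1}$) drops exactly one $C_k$-edge and picks up its spoke. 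The third and only non-routine step is connectedness.

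For connectedness I would argue as follows. The cycle-pieces of $H$ are $P_0:=C_0-a_1$, the two arcs $P_k^1,P_k^2$ of $C_k-\{a_{k+1},b_k\}$ for $0<k<x$, and $P_x:=C_x-b_x$; there are $2x$ of them, linked by the $2x$ spokes, and they form a $2$-regular multigraph $M$ (each piece has two ends and each end is an end of exactly one spoke). Assign piece $P_0$ to layer $0$, each $P_k^i$ to layer $k$, and $P_x$ to layer $x$; every spoke joins consecutive layers. The key point is that for $0<k<x$ each arc $P_k^i$ meets layer $k-1$ in exactly one spoke and layer $k+1$ in exactly one spoke: deleting from the cycle $C_k$ the two disjoint edges $a_{k+1}$ (both endpoints pointing radially ``up'') and $b_k$ (both endpoints pointing ``down'') produces two arcs, each inheriting one ``up'' end and one ``down'' end. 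Consequently a walk in $M$ started at $P_0$ (which has two ``up'' spokes) is forced to climb monotonically to $P_x$ and is then forced to descend monotonically back to $P_0$; comparing which layer-$k$ piece is used on the ascent and which on the descent — they must differ, since a piece has only one up-spoke and one down-spoke — shows that all $2x$ pieces lie on this single closed walk. Hence $M$ is one cycle, so $H$ is one cycle through every vertex. (Bipartiteness and $3$-connectivity were never used, which yields the remark about arbitrary cubic AD-planar graphs.)

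The main obstacle is this last step: ensuring that the $2x$ boundary pieces and $2x$ spokes glue into a single cycle rather than a disjoint union of several. Everything there hinges on two small structural facts that I expect to be the places needing the most care: that the two edges $a_{k+1},b_k$ deleted from each intermediate cycle $C_k$ are vertex-disjoint, and that because each of these two edges has both of its endpoints of the same radial type, deleting them splits $C_k$ into two arcs that each match an up-spoke with a down-spoke — which is exactly what propagates connectedness from one annulus to the next.
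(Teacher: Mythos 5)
Your construction is exactly the paper's own ``ring strategy'': the paper's proof consists of pointing to Figure \ref{hamil}, which depicts precisely the cycle you build by selecting one quadrilateral face per annulus and rerouting each boundary cycle through its two spokes. Your write-up is correct and simply supplies the verification (disjointness of $a_{k+1}$ and $b_k$, and the monotone ascent/descent argument for connectedness) that the figure leaves implicit.
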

\begin{proof}
The proof of the Lemma is clear from Figure \ref{hamil}. Note that in Figure \ref{hamil}, the illustrated graph has an odd number of annuli. If we consider a graph with even number of annuli, the same strategy will still work.  
\end{proof}
\begin{figure}[ht] 
\centering
\includegraphics[scale=.3]{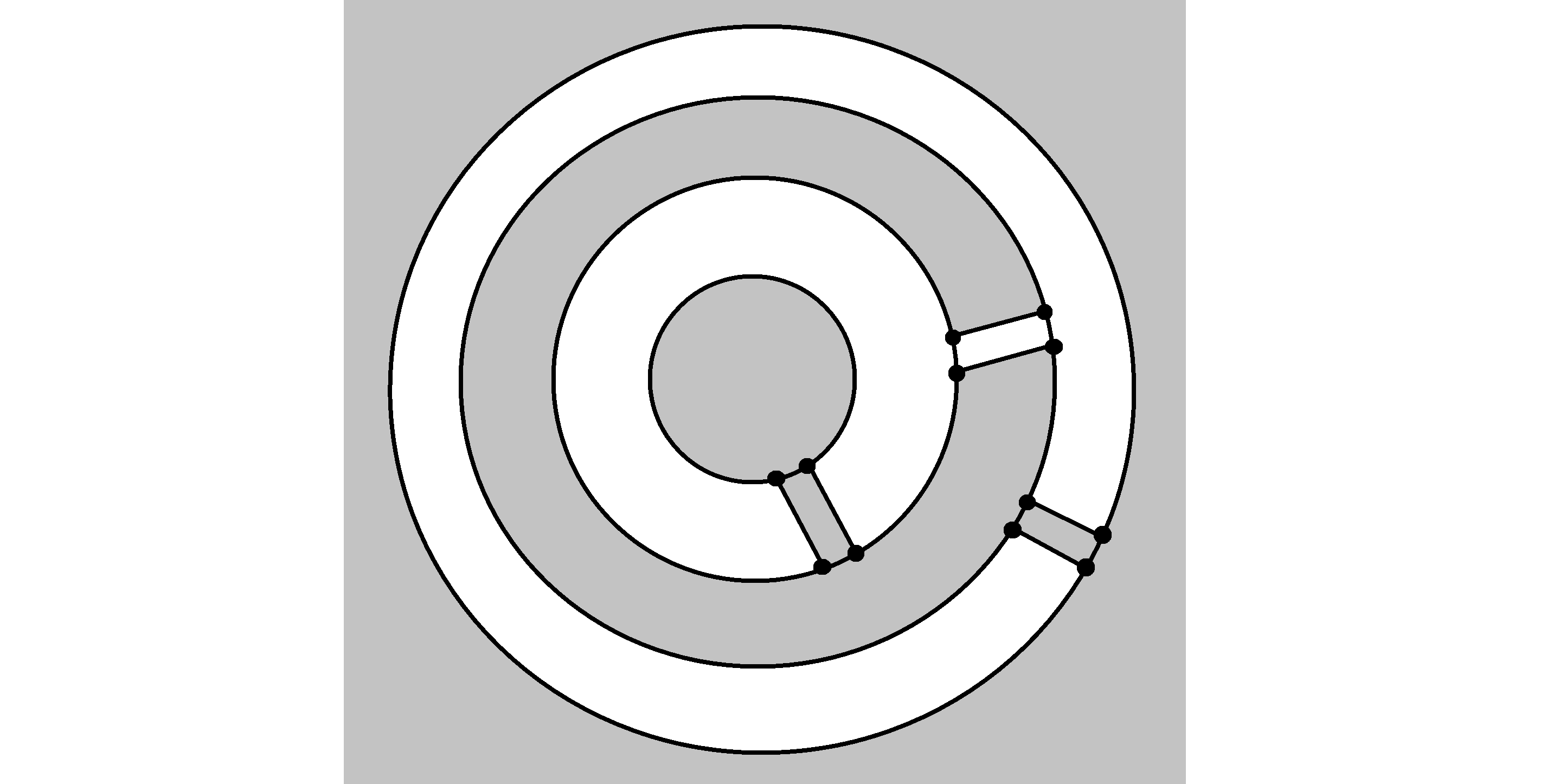}
\caption{Figure showing strategy of constructing a Hamiltonian cycle given an ADB graph with a $4$-cycle in every annulus.}\label{hamil}
\end{figure}

Let us consider an ADB-AC graph $G$. Let us consider the outer annulus of the graph $G$. There are even number of faces in the outer annulus of $G$ since, the outer boundary of the outer annulus of $G$, is an even cycle. Thus, the set of faces in the outer annulus of $G$, denoted by $F(O_G)$ can be partitioned into subsets $F^1(O_G)$ and $F^2(O_G)$, such that no two faces in $F^i(O_G)$, $i \in \{1,2\}$ are adjacent to each other. 
\begin{lemma}\label{face_part}
Let us consider an ADB-AC graph $G$ with $x \geq 3$ annuli. Either $F^1(O_G)$ or $F^2(O_G)$ consists of only $4$-cycles.
\end{lemma}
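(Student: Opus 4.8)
The plan is to analyze the local structure of a single annulus—the outer annulus $O_G$—using the fact that $G$ is bipartite, $3$-regular, and (crucially) annular-connected. First I would set up the combinatorial skeleton of $O_G$: let the outer boundary be an even cycle $u_1u_2\cdots u_{2m}u_1$ and the inner boundary an even cycle $w_1w_2\cdots w_{2\ell}w_1$, and let the faces of $O_G$ be $\varphi_1,\dots,\varphi_{2m}$ in cyclic order, where $\varphi_j$ and $\varphi_{j+1}$ share exactly one ``radial'' edge running from the outer boundary to the inner boundary. (That each face meets both boundaries in an arc and that consecutive faces share a single radial edge is exactly what Definition~\ref{ADB}(5) buys us; I would spell this out carefully, since it is the structural heart of the argument.) Partition the faces into $F^1(O_G)=\{\varphi_1,\varphi_3,\dots\}$ and $F^2(O_G)=\{\varphi_2,\varphi_4,\dots\}$, which is well defined because $2m$ is even.

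Next I would count edges on each face. Since $G$ is cubic, each outer-boundary vertex $u_i$ has exactly one edge leaving $O_G$ outward (into the next annulus or the exterior face) and two edges on the boundary cycle or radial edges; similarly for inner-boundary vertices. Writing $a_j$ (resp.\ $b_j$) for the number of outer- (resp.\ inner-) boundary edges on face $\varphi_j$, the length of $\varphi_j$ is $a_j+b_j+2$ (the $+2$ counting its two radial edges). Bipartiteness forces every face length to be even, so $a_j+b_j$ is even. Summing the $a_j$ over all $j$ gives $2m$ and summing the $b_j$ gives $2\ell$; summing over only the odd-indexed faces versus only the even-indexed faces, together with the annular-connectedness hypothesis, is where the real content lies.

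Here is where I expect the main obstacle, and the step I would spend the most care on. The claim ``$F^1$ or $F^2$ consists only of $4$-cycles'' is equivalent to saying that for one of the two parities, every face $\varphi_j$ of that parity has $a_j=b_j=1$. The leverage comes from the ADB-AC condition applied to the annulus $O_G$ itself: deleting the interior edges of $O_G$ must leave either two Barnette graphs, or a Barnette graph and a vertexless Jordan curve. Deleting the interior of $O_G$ collapses the inner boundary, so the inner boundary $w_1\cdots w_{2\ell}$ must become a face of the resulting inner Barnette graph (or the trivial Jordan curve case, $\ell$ small); more importantly, the way the $b_j$'s distribute is constrained because each $w_k$ has a prescribed degree in the collapsed graph. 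I would argue that annular-connectedness forces the radial edges to be distributed so that, on one side of the bipartition of the outer boundary, consecutive radial edges are ``as close as possible,'' pinning $a_j=1$ for all $j$ of that parity; an analogous argument on the inner boundary pins $b_j=1$ for the same parity. The delicate part is ruling out a ``mixed'' configuration where, say, $F^1$ contains both a $4$-cycle and a $6$-cycle: I would show such a configuration either violates bipartiteness of one of the two graphs produced by deleting the interior of $O_G$, or violates $3$-connectivity/$3$-regularity of that graph. I would organize this as a short case analysis on the multiset $\{a_j+b_j : \varphi_j \in F^i(O_G)\}$, using that each $a_j,b_j \ge 1$ (every face touches both boundaries, again by Definition~\ref{ADB}) and that the minimum configuration $a_j=b_j=1$ for all faces of one parity is the only one compatible with all the structural constraints. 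Finally I would note that the hypothesis $x\ge 3$ is used to guarantee $O_G$ has a genuine inner annulus adjacent to it, so that deleting the interior of $O_G$ really does produce a Barnette graph on the inner side rather than degenerating.
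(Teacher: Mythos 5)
Your overall direction matches the paper's: both delete the interior edges of $O_G$ and lean on the ADB-AC hypothesis to get a bipartite Barnette graph $H=G_{x-1}$ on the inner side. But the decisive steps are missing. Exactly where you write ``here is where I expect the main obstacle'' and ``I would argue\dots'', the paper supplies two concrete parity facts that your proposal never identifies: (i) every radial edge interior to $O_G$ attaches to the outer arc of a face of $O_H$, and \emph{all} of these faces lie in a single partition class $F^i(O_H)$ --- otherwise two attachment points separated by an odd arc would create an odd face of $G$; and (ii) each such face of $O_H$ receives an \emph{even} number of radial edges --- otherwise that face, read as a face of $G$, would have odd length. These two facts together force the faces of $O_G$ to alternate between $4$-cycles (those lying between consecutive attachments on the same face of $O_H$) and the longer ``gap'' faces, which is the lemma. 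Your proposed substitute --- a case analysis on the multiset $\{a_j+b_j\}$ together with appeals to $3$-connectivity --- is not carried out, and it is not clear it could be, because the constraint genuinely lives in the $(x-1)$-th annulus rather than in $O_G$ alone.

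There is also a structural misstep in your setup. For the \emph{outermost} annulus there is no ``edge leaving $O_G$ outward'': the exterior of the outer boundary is the face $f_e$, so the third edge at every outer-boundary vertex is a radial edge of $O_G$ itself. Consequently $a_j=1$ holds for \emph{every} face of $O_G$ automatically, each face has length $b_j+3$, and the entire content of the lemma is the claim that $b_j=1$ for one parity class. Your plan of ``pinning $a_j=1$ for one parity'' therefore aims at the part that is free and leaves untouched the part that needs proof; the distribution of the $b_j$'s is controlled by how the attachment points sit on the faces of the next annulus inward, which is precisely the mechanism in the paper's argument.
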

\begin{proof}
Let us create a graph $H$, by deleting all the edges in the interior of the outer annulus (along with their respective vertices). $H$ would be an ADB-AC graph, since $G$ is an ADB-AC graph. Thus, $F(O_H)$ can be partitioned into subsets $F^1(O_H)$ and $F^2(O_H)$, such that no two faces in $F^i(O_H)$, $i \in \{1,2\}$ are adjacent to each other.\par
We claim that, the edges in the interior of the outer annulus of $O_G$ are all attached to the outer boundary of faces in $O_H$, such that all such faces belong either in $F^1(O_H)$ or in $F^2(O_H)$, but not both. To prove this claim, let us assume otherwise. Then, there exits two edges $e_1$ and $e_2$ in the interior of $O_G$, such that $e_1$ is attached to the outer boundary of some face $f^1\in F^1(O_H)$ and $e^2$ is attached to some face $f^2\in F^2(O_H)$. Let, $v_1$ and $v_2$ be the two vertices by which $e_1$ and $e_2$ are attached to the outer boundary of $O_H$. Then there must be an odd number of vertices between $v_1$ and $v_2$, considering the fact that $f^1$ and $f^2$ are in different partitions. Thus, there is an odd cycle in $G$, which is impossible, since $G$ is bipartite.\par

Let us assume, without loss of generality, that all edges in interior of $O_G$ are attached to outer boundary of faces in $F^1(O_H)$. Note that for an arbitrary face $f \in F^1(O_H)$, to which some of such edges are attached, we can assert that there are only even number of edges attached to $f$, Otherwise, $f$ would transform into a odd cycle (face) $f'$ in $G$. Thus, there would always be a sequence of odd number of $4$-cycles, $c_1, \dots , c_{2k+1}$ (for some integer $k\geq 1$) formed in $O_G$ adjacent to $f'$. Starting from $c_1$, if we consider all alternate faces in $O_G$, they will all be $4$-cycles and will form a face partition of $4$-cycles in $O_G$. Thus the statement holds.    
\end{proof}

As per convention, we will assume $F^1(O_G)$ to be the face partition containing $4$-cycles only in $O_G$. Note here that, both the inner and outer annulus in any planar annular embedding of an $2$-ADB graph $G$ must have at least two $4$-cycles each. This along with Lemma \ref{Hamil_four} gives the following corollary.
\begin{corollary}
Every $2$-ADB graph is Hamiltonian.
\end{corollary}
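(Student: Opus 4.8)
The plan is to reduce the corollary to Lemma~\ref{Hamil_four} by showing that in a $2$-ADB graph each of the two annuli contains at least one (in fact at least two) quadrilateral face. Fix a planar annular embedding of $G$ with annuli $A_1,A_2$, interior face $f_i$ and exterior face $f_e$, and let $C$ denote the common boundary cycle of $A_1$ and $A_2$ (the outer boundary of $A_1$ and the inner boundary of $A_2$). By condition~5 of Definition~\ref{ADB} every vertex of $G$ lies on one of the cycles $f_i,C,f_e$; since $G$ is cubic, every vertex of $f_i$ (resp.\ $f_e$) carries exactly one further edge, a crossing edge of $A_1$ (resp.\ $A_2$) with its other endpoint on $C$. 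Because $G$ is simple, $3$-connected and bipartite, $f_i$ and $f_e$ are even cycles of length $\ge 4$; write $|f_i|=2p$, $|f_e|=2q$ with $p,q\ge 2$, so $|C|=2p+2q$, and call a vertex of $C$ \emph{inward} or \emph{outward} according as its crossing edge runs to $f_i$ or to $f_e$. Each face of $A_1$ is bounded by one edge of $f_i$, two crossing edges of $A_1$, and a sub-path of $C$, and it is a quadrilateral precisely when that sub-path is a single edge; hence the quadrilateral faces of $A_1$ correspond to the edges of $C$ whose two endpoints are both inward, and symmetrically for $A_2$ with outward endpoints.

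The key structural step is a parity fact about the cyclic sequence of inward/outward vertices around $C$: every maximal block of consecutive inward vertices, and every maximal block of consecutive outward vertices, has even length. To prove it, take a proper $2$-colouring of $G$. Consecutive vertices of $f_i$ get opposite colours, hence (following their crossing edges) their images on $C$ get opposite colours, hence lie at odd distance along $C$; equivalently, two inward vertices of $C$ that are consecutive among the inward vertices are separated by an even number of outward vertices. Planarity is used exactly here: it forces the cyclic order of the crossing edges of $A_1$ around $f_i$ to agree with the cyclic order of their $C$-endpoints around $C$, so that ``consecutive on $f_i$'' becomes ``consecutive among the inward vertices of $C$''. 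The symmetric statement for $f_e$ says that two outward vertices of $C$ consecutive among the outward vertices are separated by an even number of inward vertices. A maximal inward block of length $m$ sits between two outward vertices that are consecutive among the outward vertices and contributes exactly $m$ inward vertices between them, so $m$ is even; the argument for outward blocks is identical.

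Now count. Let $r\ge 1$ be the number of maximal inward blocks, which equals the number of maximal outward blocks (they alternate around $C$). Each inward block has even length $\ge 2$, so $2r\le 2p$, i.e.\ $r\le p$; similarly $r\le q$. The number of edges of $C$ with both endpoints inward is $\sum_{j}(\ell_j-1)=2p-r$, where $\ell_j$ are the inward block lengths; hence $A_1$ has $2p-r\ge p\ge 2$ quadrilateral faces. The same computation for $A_2$ gives at least $2q-r\ge q\ge 2$ quadrilateral faces. In particular every annulus of the ADB graph $G$ contains a $4$-cycle, so Lemma~\ref{Hamil_four} applies and $G$ is Hamiltonian.

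The main obstacle is the parity bookkeeping in the middle step: one must be careful that the two congruence conditions coming from $f_i$ and from $f_e$ are applied to the correct pairs of vertices of $C$, which is precisely where planarity (the agreement of cyclic orders) is needed, and one should check the small boundary cases --- $r=1$ is in fact excluded by $3$-connectivity since it would give a $2$-vertex cut of $C$ separating the $f_i$-side from the $f_e$-side, though the weaker bound $r\le\min(p,q)$ already suffices. The remaining ingredients --- that faces of $A_1$ have the claimed shape, that $p,q\ge 2$, and the final arithmetic --- are routine.
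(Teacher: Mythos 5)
Your proposal follows essentially the same route as the paper: the paper simply asserts that in any planar annular embedding of a $2$-ADB graph both annuli contain at least two $4$-cycles and then invokes Lemma~\ref{Hamil_four}. Your argument reaches the identical conclusion but actually proves that assertion (via the bipartition/parity analysis of inward and outward vertices on the middle cycle), so it supplies correctly the detail the paper leaves implicit.
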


\begin{figure}[ht] 
\centering
\includegraphics[scale=.8]{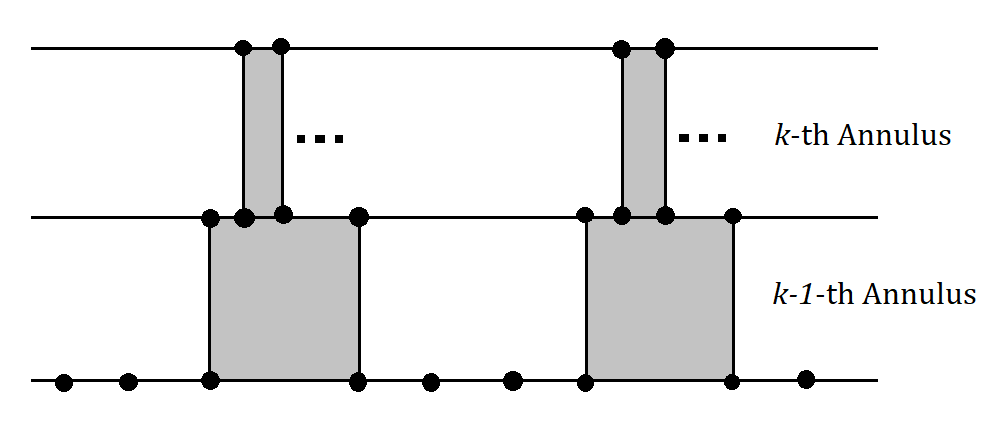}
\caption{Figure showing the recursive pyramid strategy to find a Hamiltonian cycle for Statement 1 in Theorem \ref{twocasethm}. The grey region is the path connected region that is extended recursively through the annuli. The dots show possibility of other $4$-faces in the $k$-th annular restriction.}   \label{strategyt1}
\end{figure}

\begin{corollary}
Every $G$ be an $x$-ADB graph. If there exists a planar annular projection of $G$ such that the $k$-th annulus $2 \leq k \leq x$ has more edges in its interior than the number of faces in the interior of the $k-1$th annulus, then $G$ is Hamiltonian. 
\end{corollary}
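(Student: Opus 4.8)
The plan is to construct the Hamiltonian cycle directly, growing the region it bounds one annulus at a time --- the recursive ``pyramid'' procedure of Figure \ref{strategyt1}, underlying Theorem \ref{twocasethm} --- and to use the hypothesis only at the inductive step. Fix a planar annular embedding witnessing the hypothesis and let $m_j$ be the number of edges in the interior of $A_j$. By Definition \ref{ADB} each such edge joins the inner to the outer boundary of $A_j$, so in the planar embedding these $m_j$ non-crossing chords cut the annular region into exactly $m_j$ faces; thus the hypothesis reads $m_k>m_{k-1}$ for $2\le k\le x$. Note that Lemma \ref{Hamil_four} does not suffice here: a strictly increasing sequence of $m_j$'s need not put a $4$-cycle in every annulus ($A_{x-1}$ can fail), so a genuine construction is needed.

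I would phrase the construction as a face $2$-colouring: in a cubic plane graph, a partition of the faces into an inside class and an outside class is the face-partition of a Hamiltonian cycle exactly when both classes induce connected subgraphs of the dual and no vertex meets three faces of one class. Put $f_i$ inside, $f_e$ outside, and colour annulus by annulus, maintaining, after the faces of the $k$-annular restriction $G_k$ are coloured: (i) the inside faces of $G_k$ and the outside faces of $G_k$ each induce a connected subgraph of $G_k^{*}$; (ii) every vertex of $G_k$ not on the outer boundary of $A_k$ is non-monochromatic; and (iii) the colouring of $A_k$ is ``extendable'' along the outer boundary --- inside and outside faces interleaved tightly enough there that the next annulus can be glued on while making its newly-interior vertices non-monochromatic. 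The base case colours $f_i$ together with a valid pattern on $A_1$, which is easy since every face of $A_1$ touches $f_i$ and the only local constraint is that the inside faces of $A_1$ form an independent set around the annulus.

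The heart is the step $A_{k-1}\to A_k$. The cycle shared by the two annuli carries $m_{k-1}$ vertices whose interior edge points inward (already made non-monochromatic when $A_{k-1}$ was coloured) and $m_k$ vertices whose interior edge points outward into $A_k$, and these $m_k$ edges cut $A_k$ into its $m_k$ faces. Since $m_k>m_{k-1}$, at least $m_k-m_{k-1}\ge 1$ of the $m_k$ arcs that the outward vertices cut off on the shared cycle contain no inward vertex --- equivalently, at least $m_k-m_{k-1}$ faces of $A_k$ are bounded on their inner side by a single edge. That slack is what I would spend: colour the faces of $A_k$ so that every inward shared-cycle vertex becomes non-monochromatic (each forces the colour of the single $A_k$-face straddling it, but these forced faces are few relative to $m_k$), both colour classes stay connected, and the outer boundary of $A_k$ again carries the extendable shape (iii). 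At $k=x$ the outer boundary of $A_x$ is $f_e$; condition (iii) at level $x-1$ prevents two consecutive faces of $A_x$ from both being outside, so every vertex of $f_e$ is non-monochromatic, and the edge cut between the classes is the Hamiltonian cycle.

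The main obstacle is making the inductive step precise: pinning down the correct ``extendable shape'' in (iii), and checking that $m_k>m_{k-1}$ always allows $A_k$ to be (re)coloured so as to meet connectivity, non-monochromaticity of the shared-cycle vertices, and (iii) at once --- the strict inequality is used nowhere else, and the slack $m_k-m_{k-1}$ is precisely the budget for shifting the colour pattern when constraints forced from $A_{k-1}$ would otherwise clash. Everything else --- the connectivity bookkeeping, the base case, closure at $f_e$, and the small cases $x\le 2$ (already covered by the preceding corollary) --- should be routine once this step is in hand.
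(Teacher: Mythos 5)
Your proposal takes a genuinely different route from the paper, and you should know what the paper actually does: it disposes of this corollary in two lines, asserting that the hypothesis forces a $4$-cycle into every annulus and then invoking Lemma \ref{Hamil_four}. You explicitly reject that route, and your reason is sound --- writing $m_j$ for the number of edges interior to $A_j$, the condition $m_k>m_{k-1}$ controls only the \emph{inner} arcs of the faces of $A_k$, while whether a face is a $4$-cycle also depends on how the rungs of $A_{k+1}$ land on its outer arc; already $A_1$ can consist entirely of $6$-faces when $m_2\ge 2m_1$, which is compatible with $m_2>m_1$. The face-$2$-colouring framework you substitute (both colour classes connected in the dual, every vertex non-monochromatic) is the correct formalisation of the paper's grey/white pictures. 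So the skeleton is reasonable and is not the paper's argument.

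The difficulty is that the skeleton is all you supply. The entire mathematical content sits in the step $A_{k-1}\to A_k$, and you leave it open by your own admission (``the main obstacle is making the inductive step precise''). Concretely: an inward vertex $v$ on the cycle shared by $A_{k-1}$ and $A_k$ whose two incident $A_{k-1}$-faces received the same colour \emph{forces} the colour of the unique $A_k$-face whose inner arc contains $v$; a single $A_k$-face can contain several such vertices with conflicting forced colours, and whether the colouring of $A_{k-1}$ can always be arranged to avoid this is exactly the content of your undefined invariant (iii). Your quantitative intuition is also off: there are up to $m_{k-1}$ forcing constraints against $m_k$ faces, and parity of the shared cycle makes the slack $m_k-m_{k-1}$ as small as $2$, so the forced faces are not ``few relative to $m_k$.'' Until you exhibit the invariant and verify that $m_k>m_{k-1}$ propagates it --- and note that the statement's truth is not independently settled for you, since the paper's own justification rests on the very claim you refute --- this is a programme, not a proof.
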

\begin{proof}
The proof of this statement also follows from Lemma \ref{Hamil_four}. Note that if there exists such a graph $G$, then each of its annulus will have a $4$-cycle.
\end{proof}

\begin{theorem}\label{construct}
Let us consider an ADB-AC graph $G$. Then $G$ can be constructed from $B_0$ by using arbitrary number of $\alpha$-operations only. 
\end{theorem}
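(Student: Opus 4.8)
The plan is to argue by induction on $|V(G)|$, the base case being $G=B_0$ (which needs no operations, and which is itself a $1$-ADB-AC graph: its annular decomposition is the interior $4$-face, a single annulus made of four $4$-faces, and the exterior $4$-face). For the inductive step it suffices to prove the single claim: every ADB-AC graph $G\neq B_0$ is obtained from some ADB-AC graph $G'$ with $|V(G')|<|V(G)|$ by one $\alpha$-operation --- or, in the boundary case where the outer annulus is already as small as possible, by a short fixed sequence of $\alpha$-operations. Indeed, Holton--Manvel--McKay (the result underlying Lemma~\ref{edgecon}) guarantee that $G$ is $\alpha$- or $\beta$-reducible to a smaller Barnette graph; the genuine content here is that for ADB-AC graphs one can always take the reduction to be an $\alpha$-reduction \emph{and} land back inside the ADB-AC class, so that the induction never has to leave it and never has to invoke a $\beta$-operation.

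To prove the claim I would hunt for the $\alpha$-reducible configuration in the outer annulus $O_G$. Let $x$ be the number of annuli of $G$. If $O_G$ is not already ``minimal'' (in particular whenever $x\ge 3$), Lemma~\ref{face_part} supplies a face class $F^1(O_G)$ consisting entirely of $4$-cycles, at least two of them; a $4$-cycle $c\in F^1(O_G)$ is bounded by one edge of the inner boundary of $O_G$, one arc of the exterior face, and two edges interior to $O_G$, and this (possibly together with one adjacent face of $O_G$) is exactly the image of an $\alpha$-operation as drawn in Figure~\ref{demo2}(a). Undoing that $\alpha$-operation deletes $c$ together with the two vertices it contributes to the exterior face and strictly shrinks $O_G$; one then checks that the result $G'$ is again ADB-AC: $3$-regularity, planarity and bipartiteness are automatic (this is precisely the Holton--Manvel--McKay setting), $3$-connectivity survives because $c$ was chosen in the ``all-$4$-cycles'' class and so the move cannot be prying open a $2$-cut, and the annular decomposition together with annular-connectedness is preserved because the move is confined to $O_G$ and, by the parity bookkeeping used in the proof of Lemma~\ref{face_part}, it leaves the attachment pattern along the inner boundary of $O_G$ ``even''. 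Iterating, $O_G$ is whittled down to its minimal form; in that remaining case one invokes annular-connectedness directly: deleting all edges interior to $O_G$ produces exactly $G_{x-1}$, an $(x-1)$-ADB-AC graph, and $O_G$ itself is recovered from $G_{x-1}$ by a short, explicit sequence of $\alpha$-operations performed along its exterior face --- so $G$ is still reachable by $\alpha$-operations from a smaller ADB-AC graph.

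\textbf{Main obstacle.} The delicate point is guaranteeing that a legitimate $\alpha$-reducible configuration is \emph{always} available without ever leaving the ADB-AC class. Two things must be pinned down. First, when $O_G$ is minimal there is no $4$-cycle of $F^1(O_G)$ whose deletion leaves a valid annulus (an annulus needs at least two $4$-faces), so the only reduction is the one removing the whole outer annulus, and one must verify both that this decomposes into $\alpha$-operations and that $3$-connectivity is not lost when the exterior face merges with the next face inward. Second, the base regime $x=1$ must be handled, since a $1$-ADB-AC graph need not literally be a prism; one has to show its single annulus can always be reduced, one $\alpha$-step at a time, down to the four-$4$-face annulus of $B_0$. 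I expect both to follow from the same bipartiteness/parity arguments already used in Lemma~\ref{face_part} and its corollaries, but getting the $3$-connectivity bookkeeping exactly right --- and confirming that the configuration at a $4$-cycle of $F^1(O_G)$ really matches Figure~\ref{demo2}(a) rather than merely resembling it --- is where the work concentrates.
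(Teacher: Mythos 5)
Your proposal follows essentially the same route as the paper: induct by peeling off the outer annulus, using Lemma~\ref{face_part} to locate the all-$4$-cycle class $F^1(O_G)$ whose faces are the sites of inverse $\alpha$-operations, reducing $G$ to its $(x-1)$-annular restriction and ultimately to $B_0$. If anything, you are more explicit than the paper about the points it glosses over (the $x=1$ case, preservation of $3$-connectivity and of the ADB-AC property after each inverse $\alpha$-operation), so your ``main obstacle'' remarks identify exactly the verifications the published proof leaves implicit.
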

\begin{proof}
Let $G$ have $x$ annuli for a particular planar annular embedding. For $x=1$, the result is obviously true.\par

Recall that, as per our convention, the $4$-face-only partition in the outer annulus of an ADB-AC graph $G'$ is assumed to be $F^1(O_{G'})$. Let $x>1$. By Lemma \ref{face_part},say $F^1(O_G)$ consists of only $4$-cycles. Let us select any two such $4$-cycles $m$ and $m'$. All other edges in $O_G$ can be removed by performing a inverse of $alpha$-operations. Then, we perform an inverse $\alpha$-operation on the edges $e_1$ and $e_2$ in $m$ such that, $e_1$ and $e_2$ are not in the interior of $O_G$. Now, we perform another inverse of $ \alpha$-operation by deleting edges $e'_1$ and $e'_2$ in $m'$ such that, $e'_1$ and $e'_2$ were in the interior of $O_G$ in $G$. This gives us the $(x-1)$-annular restriction of $G$. By continuing this process we can ultimately reduce the graph to $1$-annular restriction of $G$. Thus it is clear that $G$ can be constructed from $B_0$ by using arbitrary number of $\alpha$-operations only. 
\end{proof}

\begin{theorem}\label{twocasethm}
Let us consider an ADB-AC graph $G$, with $x \geq 3$ annuli.
\begin{enumerate}
    \item For any $k$, $2\leq k \leq x$, if the edges interior to $k$-th annulus in $G$ are attached to faces that are in the $4$-cycle-only face partition in $O(H_{k-1})$, where $H$ is the $k-1$ annular restriction of $G$ , then $G$ is Hamiltonian.
    \item For any $k$, $2\leq k \leq x$, if the edges interior to $k$-th annulus in $G$ are attached to faces that are not in $4$-cycle-only face partition in $O(H_{k-1})$, where $H$ is the $k-1$ annular restriction of $G$, then $G$ is Hamiltonian.
\end{enumerate}
\end{theorem}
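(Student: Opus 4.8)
The plan is to handle the two cases essentially in parallel, using the same "recursive pyramid" construction suggested by Figure~\ref{strategyt1}, and to reduce everything to Lemma~\ref{Hamil_four} applied to a modified graph. First I would set up notation: write $H = G_{k-1}$ for the $(k-1)$-annular restriction, and recall from Lemma~\ref{face_part} that $F^1(O_H)$ is the $4$-cycle-only partition of the outer annulus of $H$. In Case~1 the interior edges of the $k$-th annulus hang off faces in $F^1(O_H)$, i.e.\ off $4$-cycles; in Case~2 they hang off faces in $F^2(O_H)$. The key structural observation, which I would prove first as a sub-claim, is that in either case the $k$-th annulus of $G$ contains a $4$-cycle, and more strongly that one of the two natural face-partitions of $O(G_k)$ consists entirely of $4$-cycles — this is exactly the content of Lemma~\ref{face_part} read one annulus at a time, together with the parity argument already used there (an odd number of interior edges on a face would create an odd face). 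Iterating this from $k=2$ up to $k=x$ shows that under the hypothesis of either statement, \emph{every} annulus of $G$ carries a $4$-cycle, and then Lemma~\ref{Hamil_four} finishes the proof immediately.

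More concretely, the steps I would carry out are: (i) prove the sub-claim that the hypothesis of Statement~1 (resp.\ Statement~2) forces the interior edges of the $k$-th annulus to all attach to faces of a single partition class of $O(G_{k-1})$, using the bipartiteness/odd-cycle parity argument verbatim from Lemma~\ref{face_part}; (ii) deduce that $O(G_k)$ has a $4$-cycle-only face partition, i.e.\ the property propagates from the $(k-1)$-st restriction to the $k$-th; (iii) run this induction on $k$ to conclude every annulus $A_1,\dots,A_x$ contains a $4$-cycle; (iv) invoke Lemma~\ref{Hamil_four}. For the explicit Hamiltonian cycle one can instead describe the pyramid directly: maintain a path-connected grey region (as in Figure~\ref{strategyt1}) that, within each annulus, uses three sides of each designated $4$-cycle and the connecting spokes, threading inward annulus by annulus and closing up at the innermost face $f_i$; but appealing to Lemma~\ref{Hamil_four} makes step (iv) a one-liner, so I would prefer that.

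The reason the two statements are genuinely both true — and why they can be merged — is that the dichotomy "attached to $F^1$" versus "attached to $F^2$" is not a dichotomy about whether $4$-cycles appear, but only about \emph{which} of the two alternating classes in $O(G_k)$ is the $4$-cycle class; in both cases a $4$-cycle-only partition of $O(G_k)$ exists, just possibly swapped relative to $O(G_{k-1})$. I would make this swap explicit: in Case~1, subdividing each $4$-cycle of $F^1(O_H)$ by the incoming spokes keeps those faces $4$-cycles only if an even number of spokes land on each (the parity argument), and then the \emph{new} outer $4$-cycle-class is the one containing the small $4$-cycles squeezed between consecutive spokes; in Case~2 the analogous squeezed $4$-cycles sit over the $F^2$ faces, but they are still $4$-cycles, so $O(G_k)$ again has an all-$4$-cycle class. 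Either way the inductive invariant "$O(G_k)$ has a $4$-cycle-only face partition" is preserved, and this is exactly what feeds Lemma~\ref{Hamil_four}.

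The main obstacle I anticipate is the base/validity of the parity step when $k=2$ versus $k>2$: for $k>2$ one may delete the interior of the outer annulus and still have an ADB-AC graph (that is how Lemma~\ref{face_part} was proved), but here we need the statement relativized to $G_{k-1}$, so I must check that $G_{k-1}$ is itself a bona fide Barnette graph (or ADB-AC graph) to which the odd-cycle argument applies — this uses the annular-connectedness hypothesis and the definition of $k$-annular restriction, and is the one place the argument could break if, say, $G_{k-1}$ failed to be $3$-connected or bipartite. A secondary, more cosmetic obstacle is bookkeeping the "even number of spokes per face" claim cleanly in both cases at once; I would state it once as a lemma-internal claim and reuse it. Once those are nailed down, the rest is the induction plus a citation of Lemma~\ref{Hamil_four}.
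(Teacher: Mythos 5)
Your reduction of both cases to Lemma~\ref{Hamil_four} breaks down for Statement~1. The claim that ``in either case the $k$-th annulus of $G$ contains a $4$-cycle'' conflates $4$-faces of the restriction $G_k$ with $4$-faces of $G$ itself. Lemma~\ref{face_part} does give a $4$-cycle-only partition $F^1(O_{G_k})$ of the outer annulus of the \emph{restriction}, but under the hypothesis of Statement~1 the spokes of the $(k+1)$-th annulus attach precisely to those $F^1$ faces, subdividing their outer edges; each such face that receives spokes becomes a $6$-cycle or larger in $G$, and the untouched faces in $F^2(O_{G_k})$ carry no size guarantee. So an all-block-annuli graph can have annuli containing no $4$-face of $G$ at all --- the spider-web graphs, which the paper notes are exactly the all-block case, have only hexagonal faces in their middle annuli. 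Since the ring strategy of Lemma~\ref{Hamil_four} needs a genuine $4$-face of $G$ in each annulus to route the cycle across it, your steps (iii)--(iv) are unavailable in Case~1. (Your third paragraph also asserts that an even number of incoming spokes ``keeps those faces $4$-cycles''; it does not --- two spokes already turn a $4$-face into a $6$-face; the squeezed $4$-cycles you mention live in the \emph{next} annulus, not the one whose $4$-face you need.)

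The paper accordingly uses two genuinely different constructions. For Statement~2 it argues exactly as you do: the spokes avoid the $4$-cycle class, so those $4$-faces survive unsubdivided into $G$ and Lemma~\ref{Hamil_four} applies --- this half of your proposal is fine. For Statement~1 it builds the cycle by the ``pyramid strategy'': grow a path-connected grey region outward from $f_i$, absorbing at each annulus the faces of $F^1(O_{G_k})$ together with the spokes that subdivide them, so that the boundary of the final region is the Hamiltonian cycle. That is essentially the construction you relegate to a parenthetical aside and decline to carry out; it is not an optional alternative but the only part of the argument that covers Case~1.
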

\begin{proof}

For the first statement, it is easy to construct the Hamiltonian cycle in $G$, in a recursive manner. We start by a planar annular projection of $G$. We include the interior face $f_i$ in the interior of a path connected region $R$. Recalling our convention that, the $4$-face-only partition in the outer annulus of an ADB-AC graph $G'$ is assumed to be $F^1(O_{G'})$, we can describe this strategy by extending $H$ through $k$-th annulus by coloring the faces in $F^1(O_{G_{k}})$ in grey, for every $k$-recursively. Thus we gradually extend $R$ following the strategy shown in Figure \ref{strategyt1}. It is easy to see that, when we recursively apply this strategy for all the annuli in $G$, the boundary of the final path connected region would manifest into a Hamiltonian cycle. We will call this strategy of constructing Hamiltonian cycles henceforth as the \textit{pyramid strategy}. \par

Recall that, as per our convention, the $4$-face-only partition in the outer annulus of an ADB-AC graph $G'$ is assumed to be $F^1(O_{G'})$. Let $x>1$. For the second statement, note that for any $k$, $2\leq k \leq x$, if the edges interior to $k$-th annulus in $G$ are attached to faces that are not in $4$-cycle-only face partition in $O(H_{k-1})$, $F^2(O_H)$, where $H$ is the $k-1$ annular restriction of $G$, then for the $k$-th annulus, there is a $4$-cycle. The reason is no edges in the $k+1$-th annulus are attached to the face partition with $4$-cycles of the $k$-th annulus. Thus by Lemma \ref{Hamil_four}, $G$ has to be Hamiltonian. The Hamiltonian cycle can be constructed using the same strategy as depicted in Figure \ref{Hamil_four}. We will call this strategy of constructing Hamiltonian cycles henceforth as the \textit{ring strategy}.   
\end{proof}

Let $G$ be ADB-AC graph drawn on a plane. From Theorem \ref{twocasethm}, it is evident that the annuli of $G$ can be one of two possible types depending on how the edges in the interior of a given annulus are attached to the previous annulus. For the $k$-th annulus $A$, in $G$ if the edges in the interior of the annulus $A$ are attached to the $k-1$-th annulus such that the edges are attached to the faces in the face partition of $O_{G_{k-1}}$ consisting of only $4$-cycles, then we call $A$ a \textit{block} annulus, otherwise we call $A$ a \textit{ring} annulus. We can thus restate Theorem \ref{twocasethm} as: Given an ADB-AC graph $G$, if $G$ consists of only block annuli or only ring annuli, then $G$ is Hamiltonian. Note that the spider-web graphs that motivated the work are a subclass of ADB graphs that consist of only block annuli. Also note that given a planar representation ADB-AC graph $G$, the first and second annulus can be considered both a block annulus and a ring annulus. By convention, we will consider them as of the same category as the third annulus.
\begin{corollary}
Let $G$ be an ADB-AC graph consisting only of ring annuli, $A_1,\dots,A_x$. Let $|F_A|$ denote the number of faces in an annulus. Then $G$ must have at least $\frac{1}{2^x}\Pi_{i=1}^x(|F_{A_{i}}|)$ Hamiltonian cycles. 
\end{corollary}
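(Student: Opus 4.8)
The plan is to count, at each annulus, how many independent "local choices" the ring strategy of Theorem~\ref{twocasethm} (Statement 2) leaves us. Recall that for a ring annulus $A_i$ the edges interior to the next annulus attach only to the faces of $O(H_{i-1})$ that are \emph{not} in the $4$-cycle-only partition; consequently the $4$-cycle-only partition $F^1(A_i)$ is "free" in the sense that it is not used to attach the interior of $A_{i+1}$. This is exactly what makes Lemma~\ref{Hamil_four} applicable: in each annulus $A_i$ we may route the Hamiltonian cycle so that it enters and leaves through the boundary of one chosen $4$-cycle of $F^1(A_i)$, detouring around all the other faces of that annulus. First I would fix a planar annular embedding of $G$ and, for each annulus $A_i$, observe that $|F^1(A_i)| = \tfrac12|F_{A_i}|$, since the outer boundary of each annulus is an even cycle and the two partition classes alternate.

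Next I would make the recursive construction precise. Working outward from the interior face $f_i$, suppose inductively that on $G_{i-1}$ we have built a family of Hamiltonian cycles, one for each way of choosing, at each of the annuli $A_1,\dots,A_{i-1}$, a "gateway" $4$-cycle in the corresponding free partition class. Passing to $A_i$: the ring strategy lets us splice the cycle through $A_i$ by selecting any one face $c \in F^1(A_i)$ as the gateway and then threading the cycle around the remaining faces of $A_i$ in the unique manner dictated by Lemma~\ref{Hamil_four}'s picture (Figure~\ref{hamil}); different choices of $c$ yield genuinely different cycles because the edge sets inside $A_i$ differ. So each extension multiplies the number of Hamiltonian cycles by $|F^1(A_i)| = \tfrac12 |F_{A_i}|$. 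Multiplying over $i = 1,\dots,x$ gives at least $\prod_{i=1}^x \tfrac12 |F_{A_i}| = \frac{1}{2^x}\prod_{i=1}^x |F_{A_i}|$ Hamiltonian cycles, which is the claimed bound. (The convention that $A_1,A_2$ are treated as ring annuli of the same type as $A_3$ is what lets the product start at $i=1$.)

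The main obstacle is showing that the choices are \emph{independent} and \emph{non-redundant}: that two distinct sequences $(c_1,\dots,c_x)$ of gateway faces really produce two distinct Hamiltonian cycles, so that the counts multiply rather than merely giving an upper bound on the number of strategies. I would argue this by looking at the lowest index $i$ at which two sequences differ: inside annulus $A_i$ the two resulting cycles use different subsets of the interior edges of $A_i$ (the gateway $4$-cycle is traversed "straight through", the others are traversed with a detour), so the cycles already differ as edge sets within $A_i$, regardless of what happens in outer annuli. A secondary point to check is that the pyramid-style region stays path-connected and its complement stays path-connected after each splice — but this is exactly the content of the ring strategy already established in Theorem~\ref{twocasethm}, so it can be invoked rather than reproved. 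Finally I would note the bound is stated as "at least", so I do not need to worry about Hamiltonian cycles arising from entirely different routings; the explicit family above suffices.
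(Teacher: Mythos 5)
Your proposal is correct and follows essentially the same route as the paper: pick one gateway $4$-cycle from the $4$-cycle-only partition class (of size $\tfrac12|F_{A_i}|$) in each ring annulus, apply the ring strategy of Lemma \ref{Hamil_four}, and multiply the choices over the $x$ annuli. Your version is in fact more careful than the paper's, since you explicitly justify that distinct gateway sequences yield distinct Hamiltonian cycles, a point the paper only asserts.
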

\begin{proof}
Let $G$ be an ADB-AC graph consisting only of ring annuli $A_1,\dots,A_x$. Any arbitrary annulus $A_i$ must have $|F_{A_{i}}|$ $4$-cycles. One $4$-cycle from every annuli can be used to create a Hamiltonian cycle. Thus, it is clear that $G$ has at least $\frac{1}{2^x}\Pi_{i=1}^x(|F_{A_{i}}|)$ Hamiltonian cycles. Given that $|F_{A_{i}}|$ is at least $4$, we can derive that any such graph will have at least $2^x$ Hamiltonian cycles.   
\end{proof}

In Theorem \ref{twocasethm}, we have already discussed a strategy of finding Hamiltonian cycles in an ADB-AC graph $G$, given that $G$ consists of only block annuli. Now we will explore some more strategies to find a Hamiltonian cycle in $G$. 

\begin{lemma}\label{strategy2}
Let us consider an ADB-AC graph $G$, with $x \geq 3$ annuli, containing only block annuli. There are at least two more strategies for finding Hamiltonian cycles in $G$, in addition to the one described in Theorem \ref{twocasethm}.
\end{lemma}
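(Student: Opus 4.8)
The plan is to exhibit two constructions of a Hamiltonian cycle in $G$ that are genuinely different from the pyramid strategy of Theorem \ref{twocasethm}, and then to argue briefly that on a suitable $G$ the three recipes produce different edge sets. Throughout I would use the fact, implicit in Lemma \ref{face_part} and the corollary following it, that a block annulus $A_i$ of $G$ contains at least $|F_{A_i}|/2 \geq 2$ faces that are $4$-cycles, namely exactly the faces of $F^1(O_{G_i})$ to which the edges interior to $A_{i+1}$ are attached.

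\emph{First additional strategy (direct ring construction).} Since $G$ consists only of block annuli, every annulus of $G$ contains a $4$-cycle, so Lemma \ref{Hamil_four} applies and its comb-type construction (Figure \ref{hamil}) yields a Hamiltonian cycle of $G$. The point that needs care, and that distinguishes this from the ring-annulus case in Theorem \ref{twocasethm}, is that in a block annulus the available $4$-cycles are precisely the ones carrying the attachment edges of the next annulus; hence one must choose in each $A_i$ a $4$-cycle $c_i$ through which the cycle ``crosses'' from the outer boundary of $A_{i-1}$ to the inner boundary of $A_{i+1}$, and check that the two crossing vertices of $c_i$ are still incorporated into the traversal of $A_{i+1}$. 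Because $A_i$ has at least two such $4$-cycles, there is always room to pick $c_i$ and $c_{i+1}$ ``out of phase'' so that the staircase closes up through every vertex. The bounded region of the resulting cycle is a comb/spiral rather than a nested pyramid, so it is a genuinely different strategy.

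\emph{Second additional strategy (hybrid pyramid--ring).} Fix an index $1<j<x$. Run the pyramid strategy on the annular restriction $G_j$, obtaining a path-connected grey region $R$ whose boundary on $G_j$ is in position; then continue outward through $A_{j+1},\dots,A_x$ with the comb-type construction of the first strategy, gluing its inner end to $\partial R$ along a $4$-cycle of $A_{j+1}$ adjacent to a grey $4$-cycle of $F^1(O_{G_j})$. Such an adjacency exists because the grey $4$-cycles at the outermost level of $G_j$ are exactly the faces to which the edges interior to $A_{j+1}$ attach, so the two partial regions merge into one path-connected region whose boundary is a Hamiltonian cycle. Letting $j$ range over $2,\dots,x-1$ gives a family of cycles that are ``pyramidal below level $j$ and comb-like above'', each a strategy distinct from the first two.

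\emph{Main obstacle.} The nontrivial part is not the topological picture but the bookkeeping that guarantees the constructed closed curve really is a Hamiltonian cycle, i.e. a simple closed curve meeting every vertex, when the $4$-cycles chosen in consecutive annuli interact: by the proof of Lemma \ref{face_part} the attached $4$-cycles of a block annulus occur in odd-length runs $c_1,\dots,c_{2k+1}$ hanging off a single face of the previous annulus, and one must verify that a consistent choice of crossing $4$-cycle in each annulus exists so that no vertex of any run is skipped and the alternating absorb/skip pattern of Lemma \ref{Hamil_four} is never forced into two incompatible choices at a shared boundary. I would handle this by induction on $x$ whose hypothesis records the position of the partial boundary on the outer cycle of $G_k$ together with which $4$-cycle of $A_k$ is used to cross, with the $2$-ADB corollary (or $x=3$) as the base case. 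To finish, I would conclude that the three strategies are genuinely different by exhibiting one concrete $G$ -- e.g. a spider-web-like graph with at least two $4$-cycles in every annulus -- on which the three recipes select different edge sets, a short explicit check rather than a general argument.
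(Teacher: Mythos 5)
Your proposal has a genuine gap: both of your additional strategies rest on the claim that every block annulus of $G$ contains a $4$-cycle, so that the comb construction of Lemma \ref{Hamil_four} can be threaded through all the annuli. That claim is false, and in fact you state its justification backwards. A face $f\in F^1(O_{G_i})$ is a $4$-cycle only in the annular restriction $G_i$; in $G$ itself, if $A_{i+1}$ is a block annulus then the edges interior to $A_{i+1}$ attach at additional vertices on the outer boundary of precisely those faces, turning each such $f$ into a face of length $4+2m$ with $m\geq 1$. So the faces of $F^1(O_{G_i})$ that receive attachments are exactly the ones that are \emph{not} $4$-faces of $G$, and if every face of $F^1(O_{G_i})$ receives attachments then the annulus $A_i$ contains no $4$-face at all. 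The spider-web graphs --- the paper's motivating example of all-block-annuli ADB-AC graphs --- realize exactly this: every intermediate annulus consists solely of hexagonal faces. Hence Lemma \ref{Hamil_four} is not applicable, your ``direct ring construction'' cannot even begin on such a graph, and your hybrid strategy fails for the same reason on the annuli $A_{j+1},\dots,A_{x-1}$ (only the outermost annulus is guaranteed $4$-faces, by Lemma \ref{face_part} applied to $G$). The availability of a $4$-face in every annulus is essentially the defining situation of ring annuli, i.e.\ the \emph{other} case of Theorem \ref{twocasethm}; your argument in effect conflates the two cases.

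The paper's proof takes a route that avoids this problem entirely: it identifies two ``pyramids'' (stacks consisting of one $F^1$-face per annulus, capped by a $4$-face in the outermost annulus, which exist because all annuli are block annuli and $G$ is $2$-edge-connected), and then produces the two further Hamiltonian cycles by locally recoloring the pyramid-strategy cycle --- once at the innermost annulus (removing $f_i$ from the grey region and absorbing all but one $4$-face of the first annulus), and once more at the two outermost annuli using the two pyramids. If you want to salvage your plan, you would need a crossing gadget adapted to the $(4+2m)$-faces that actually occur in a block annulus, rather than to $4$-faces.
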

\begin{proof}
Recall that, as per our convention, the $4$-face-only partition in the outer annulus of an ADB-AC graph $G'$ is assumed to be $F^1(O_{G'})$. Let $x>1$. Let us consider a planar projection of $G$ such that $f_i$ and $f_o$ are fixed and well defined. Note that such a graph $G$ must contain a specific substructure described as follows. This substructure contains one face from every annulus such that the faces are stacked on each other ($x$ faces stacked on one another). Let us denote these faces as $g_1, \dots, g_x$ and observe that $g_k \in F^1(O_{G_{k}})$ for the $k$ annular restriction of $G$, $3\leq k \leq x$. Also the face $g_x$ in the $x$-th annulus is a $4$-cycle. Existence of such a structure is ensured due to the fact that $G$ has only block annuli. We call such a structure a \textit{pyramid}. Now, note that, since $G$ is $2$-edge connected, there has to be at least two pyramids in $G$. We denote them as $B_1$ and $B_2$.\par

Note that, in the first annulus of $G$, there must exist a partition of faces with all $4$-cycles. We choose one from them and call it $m'$. Our strategy to build a second Hamiltonian cycle in $G$ is based on these pyramids and the $4$-cycle $m'$. The strategy is as follows:

\begin{figure}[ht] 
\centering
\includegraphics[scale=.8]{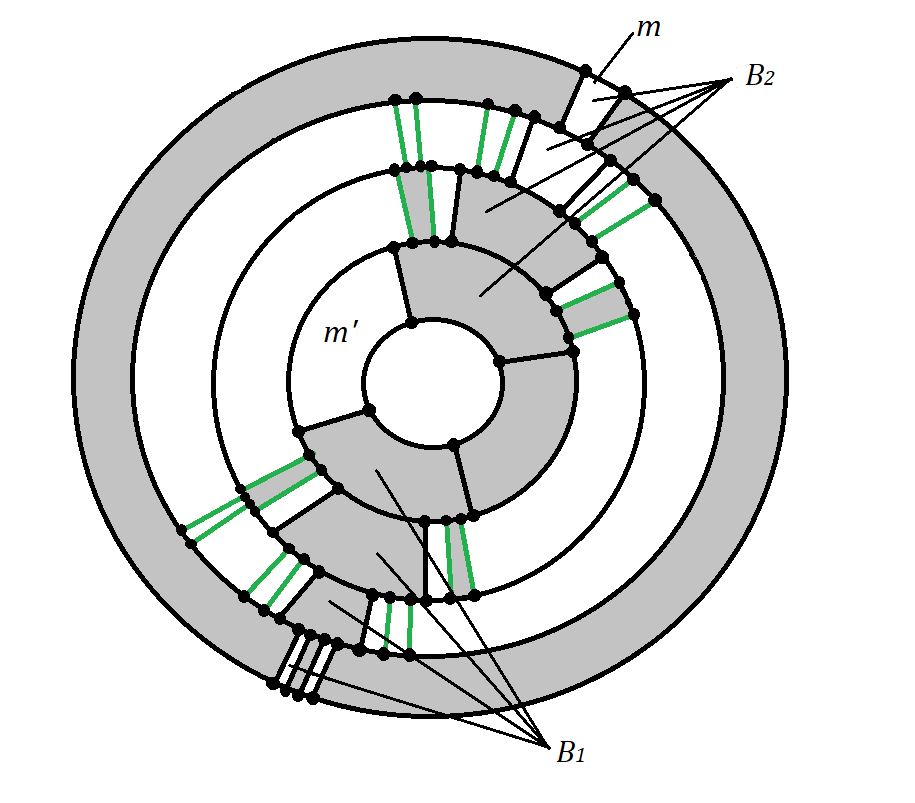}
\caption{Figure showing the pyramid-ring strategy of finding Hamiltonian cycle in ADBAC graph $G$ with all block annuli.}   \label{strategyt2}
\end{figure}

\begin{enumerate}
    \item Build a Hamiltonian cycle $H_1$ in $G$ using the strategy mentioned in Theorem \ref{twocasethm}. For the sake of understanding let us assume that we color the interior of the Hamiltonian cycle in grey as shown in Figure \ref{strategyt1}. 
    \item Note that all the faces in $B_1$ and $B_2$ are colored in grey, i.e. are interior to the cycle. Also, $m'$ is colored in white, i.e. is exterior to the cycle.
    \item Remove $f_i$ from $H$, i.e color $f_i$ in white.
    \item Color all faces in annulus $1$ except for $m'$. Call the new grey region $H_2$. 
    \item Remove coloring from all faces in annulus $x-1$ except for the face in annulus $x-1$ that belongs in $B_1$.
    \item Color all faces in annulus $x$ except for the $4$-cycles attached to the face $h_1$, such that $h_1\in B_1 \cap O(G_{x-1})$ and any one $4$-cycle $m$ attached to the face $h_2$, such that $h_2\in B_2 \cap O(G_{x-1})$. Also, we choose $m$ such that the $m \in F^1(O_G)$.
    \item Among the uncolored $4$-cycles mentioned in the step above, color the ones that are in $F^2(O_G)$.
\end{enumerate}
After Step $3$, the grey region gets path-disconnected but retains path-connectivity after Step $4$. Step $4$ also gives a Hamiltonian cycle $H_2$ in $G$. After Step $5$, note that only some vertices in the outer boundary of the $x-1$-th annulus are in the interior of the white region. After Step $6$ these vertices again lie in the boundary of the grey-white region. Note both the grey and white regions still remain path connected (the existence of $m$ ensures the path connectedness of the white region). After Step $7$ all the vertices lie in the boundary of the grey-white coloring and both the white and the grey region remain path connected. Thus the boundary of the grey-white path connected regions is a new Hamiltonian cycle $H_3$. We will call this strategy of constructing a Hamiltonian cycle in $G$ as pyramid-ring strategy or pr-strategy. The pr-strategy have been demonstrated in Figure \ref{strategyt2}.  
\end{proof}

\begin{theorem}\label{blockring}
Let us consider a planar annular-embedding of an ADB-AC graph $G$, with $x+y$ annuli, containing a sequence of $x \geq 1$ block annuli followed by a sequence of $y \geq 1$ ring annuli. Then $G$ is Hamiltonian.
\end{theorem}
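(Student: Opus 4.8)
\section*{Proof proposal for Theorem~\ref{blockring}}

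The plan is to build the Hamiltonian cycle of $G$ by splicing together the two constructions already available: the \emph{pyramid strategy} (first statement of Theorem~\ref{twocasethm}) for the block part $A_1,\dots,A_x$, and the \emph{ring strategy} of Lemma~\ref{Hamil_four} (used to prove the second statement of Theorem~\ref{twocasethm}) for the ring part $A_{x+1},\dots,A_{x+y}$. As in those proofs one maintains a path-connected ``grey'' region $R$ containing the interior face $f_i$, whose boundary $\partial R$ is a single cycle, and pushes $\partial R$ outward annulus by annulus; the target is a final $R$ for which $\partial R$ meets every vertex of $G$ while both its interior ($\supseteq f_i$) and its exterior ($\supseteq f_e$) stay path-connected, so that $\partial R$ is a Hamiltonian cycle. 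One may assume $x+y\geq 3$, the case $x=y=1$ being a $2$-ADB graph and hence already known to be Hamiltonian.

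First I would run the pyramid strategy over $A_1,\dots,A_x$, recursively colouring the $4$-cycle-only face partition $F^1(O_{G_k})$ grey for $k=1,\dots,x$; by the first statement of Theorem~\ref{twocasethm} this produces a path-connected $R$ whose boundary, restricted to $G_x$, is a Hamiltonian cycle of $G_x$, and along the outer boundary of $A_x$ this cycle uses exactly the outer edge of each grey $4$-cycle of $A_x$, the outer edges of the white faces of $A_x$ being the intervening ``gaps''. Next I would record the structural point that each ring annulus $A_{x+j}$ contains a $4$-cycle that survives intact in $G$: for $j<y$ the annulus $A_{x+j+1}$ is a ring annulus, so its interior edges attach to $F^{2}(O_{G_{x+j}})$ and never subdivide a face of $F^{1}(O_{G_{x+j}})$, while for $j=y$ the faces of $F^{1}(O_{G})$ are genuine $4$-cycle faces of $G$; Lemma~\ref{face_part} makes $F^{1}$ non-empty, and such a surviving $4$-cycle $c_{x+j}\in F^{1}(O_{G_{x+j}})$ necessarily sits above a white face of the preceding annulus. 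Fix one such $c_{x+j}$ in each ring annulus to serve as a ``rung''.

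I would then extend $R$ through $A_{x+1},\dots,A_{x+y}$ by the ring strategy, spiralling $R$ outward along the rungs $c_{x+1},\dots,c_{x+y}$ (colouring, in each ring annulus, the configuration of Figure~\ref{hamil} around its rung) and orienting the spiral so that it closes up correctly at the outer boundary of $A_{x+y}$ --- both parities of $y$ being admissible, as remarked after Lemma~\ref{Hamil_four}. The one genuinely new point, and the step I expect to be the main obstacle, is the \emph{seam} at the transition from the last block annulus $A_x$ to the first ring annulus $A_{x+1}$. Because $A_{x+1}$ attaches only to the white faces of $A_x$ --- precisely the faces filling the ``gaps'' left by the pyramid along the outer boundary of $A_x$ --- the ring strategy cannot be applied verbatim, its usual input being a full inner circle rather than a comb. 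One therefore has to redesign the colouring locally in $A_x$ and $A_{x+1}$: choose which white face of $A_x$ carries $c_{x+1}$, recolour around the gaps so that the comb is absorbed and re-emerges as part of the spiral one annulus further out, and verify throughout that (i) $\partial R$ stays a single cycle, (ii) every new vertex on the outer boundary of $A_x$ and inside $A_{x+1}$ lands on $\partial R$, and (iii) $R$ and its complement both remain path-connected --- in particular that the white faces of $A_1,\dots,A_{x-1}$ can still reach $f_e$ through the ring part. This is a local surgery on grey regions in the spirit of the pr-strategy of Lemma~\ref{strategy2}; once it is carried out, the remaining ring annuli $A_{x+2},\dots,A_{x+y}$ fall under the unmodified ring strategy and $G$ is Hamiltonian.
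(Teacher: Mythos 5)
There is a genuine gap, and you have in fact located it yourself: the seam between $A_x$ and $A_{x+1}$. Your plan runs the pyramid strategy on the block part, which leaves every face of $F^2(O_{G_x})$ white; since a ring annulus $A_{x+1}$ by definition attaches its interior edges precisely to $F^2(O_{G_x})$, the grey region has no grey face of $A_x$ to hand the colouring off to, and you acknowledge that the ring strategy ``cannot be applied verbatim.'' But the resolution you offer --- ``redesign the colouring locally,'' ``recolour around the gaps so that the comb is absorbed,'' verify (i)--(iii) --- is a description of what a proof would have to accomplish, not a proof. The local surgery is exactly the nontrivial content of the theorem, and it is left undone; without an explicit recolouring one cannot check that the boundary remains a single cycle or that both regions stay path-connected.

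The paper sidesteps the seam entirely by a different choice at the block stage: for $x\geq 3$ it colours $G_x$ not with the pyramid strategy but with the pr-strategy of Lemma~\ref{strategy2}, whose terminal colouring ($H_3$) has \emph{every} face of $F^2(O_{G_x})$ grey. Since the ring annulus $A_{x+1}$ attaches to $F^2(O_{G_x})$, some $4$-cycle of $A_{x+1}$ sits over a grey face $f$, the grey region extends through it directly, and the remaining $y-1$ ring annuli are handled by the unmodified ring strategy; the cases $x=1,2$ are absorbed by the convention that the first two annuli may be regarded as ring annuli. So the missing idea in your proposal is precisely this substitution of the pr-strategy for the pyramid strategy on $G_x$, which makes the hand-off to the ring part immediate rather than requiring an unspecified surgery. (You do gesture at the pr-strategy ``in spirit,'' but using it as the actual colouring of the block part, rather than as a repair tool at the seam, is what makes the argument close.)
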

\begin{proof}
Recall that, as per our convention, the $4$-face-only partition in the outer annulus of an ADB-AC graph $G'$ is termed as $F^1(O_{G'})$. When $x=1,2$ we treat all the annuli as ring annuli. For $x \geq 3$ block annuli, we first employ the pr-strategy on $G_x$. Since the $x+1$-th annulus and all annuli afterwards are ring annuli, there must exist a $4$-cycle in the $x+1$-th annulus such that the edges of the $4$ face are attached to some face $f \in F^2(O_{G_{x}})$. Since all the faces in $F^2(O_{G_{x}})$ are marked in grey as per the pr-strategy, we can extend the grey color to $f$ and follow the ring strategy for the rest of the $y-1$ annuli after that to obtain a Hamiltonian cycle (whose interior is marked in grey).
\end{proof}

\begin{theorem}\label{ringblock}
Let us consider a planar annular embedding of an ADB-AC graph $G$, with $x+y$ annuli, containing a sequence of $x \geq 1$ ring annuli followed by a sequence of $y \geq 1$ block annuli. Then $G$ is Hamiltonian.
\end{theorem}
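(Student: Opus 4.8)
The plan is to run Theorem~\ref{blockring} in reverse: build a Hamiltonian cycle of $G$ by applying the ring strategy to the inner $x$ ring annuli and then extending the resulting colouring outward through the $y$ block annuli using the pyramid strategy of Theorem~\ref{twocasethm}. First a reduction. If $x \le 2$, then by the convention fixing the category of the first two annuli to that of the third, $G$ consists only of block annuli and is Hamiltonian by Theorem~\ref{twocasethm}; so I would assume $x \ge 3$, noting that the block annuli, sitting in positions $x+1,\dots,x+y \ge 4$, are genuine block annuli to which no convention caveat applies. Throughout, $F^1(O_{G'})$ denotes the $4$-cycle-only partition, as per our convention.

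Next I would apply the ring strategy of Lemma~\ref{Hamil_four} to the $x$-annular restriction $G_x$, choosing one $4$-cycle face (a \emph{door}) in each of the $x$ ring annuli, and colour the interior of the resulting Hamiltonian cycle $H_0$ of $G_x$ grey. The features that matter at the interface with the block part are: $H_0$ contains every edge of the outer boundary cycle of $G_x$ except the single outer edge of the door $d_x$ of the outermost ring annulus, and every face of $F^1(O_{G_x})$ other than $d_x$ is grey and has its unique outer-boundary edge lying on $H_0$. Now reinstate $A_{x+1},\dots,A_{x+y}$. Since $A_{x+1}$ is a block annulus, every edge interior to it is attached to a face of $F^1(O_{G_x})$, and by the argument in the proof of Lemma~\ref{face_part} each such attached face acquires, along its outer side, a fan consisting of an \emph{odd} number of $4$-cycles of $A_{x+1}$. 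I would choose $d_x$ so as to avoid these attachment faces whenever possible: a $4$-cycle of $F^2(O_{G_x})$ if one exists, otherwise an unattached face of $F^1(O_{G_x})$ if one exists; then, apart from one residual case treated below, every attachment face of $F^1(O_{G_x})$ is grey.

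In this main case, for each grey attachment face $f$ I would let $H_0$ follow the now-subdivided outer edge of $f$ (covering the new boundary vertices) and then extend the grey region outward, colouring $F^1(O_{G_k})$ grey for $k = x+1,\dots,x+y$ exactly as in the pyramid strategy. Because every fan has odd length, the re-routing around the chosen $4$-cycles of each annulus closes up, every new vertex lands on the boundary of the grey region, and that boundary is a Hamiltonian cycle of $G$. The verification that it is a single cycle with path-connected interior and exterior is the same planarity/parity bookkeeping already done for the pyramid strategy in Theorem~\ref{twocasethm}, and I would organize it as an induction on $y$ whose inductive step is a single pyramid extension through one block annulus.

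The main obstacle is the residual case: $F^2(O_{G_x})$ contains no $4$-cycle \emph{and} every face of $F^1(O_{G_x})$ is an attachment face of $A_{x+1}$, which forces $d_x$ itself to be an attachment face and hence exterior to $H_0$, carrying its fan on the exterior side. Here the idea is to thread the exterior (white) region through $d_x$'s odd-length fan and out through the block annuli along that single strand, while extending the interior (grey) region through all the remaining fans as above — a two-sided pyramid whose correctness again rests on the odd length of each fan, since that is exactly what makes the alternating colouring terminate consistently at the outer boundary of $A_{x+y}$. Pinning down this case — showing the two extended colourings meet along one Hamiltonian cycle with both sides path-connected — is the part I expect to require the most care; everything else is a routine splice of the ring and pyramid strategies already established.
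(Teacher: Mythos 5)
Your overall architecture---ring strategy on the inner ring annuli, then a pyramid-type extension outward through the block annuli---is the same as the paper's, and your $x\le 2$ reduction and your treatment of the main case (door chosen disjoint from the attachment faces of $A_{x+1}$) are in the spirit of the paper's argument. The genuine gap is the residual case you yourself flag: when every $4$-cycle of $O_{G_x}$ available as a door is an attachment face of $A_{x+1}$, the white door $d_x$ carries an odd fan of $4$-cycles on its exterior side, and your ``two-sided pyramid'' is not worked out. The obvious implementations fail: if you colour $F^1(O_{G_{x+1}})$ grey as in the ordinary pyramid step, the odd-indexed fan faces over $d_x$ are grey $4$-cycles whose inner neighbours ($d_x$ and the adjacent fan/between-faces) are all white, so for $y=1$ their remaining side lies on the white exterior face and they become isolated grey components, destroying path-connectivity of the interior; reversing the parity over $d_x$'s fan instead isolates the even-indexed faces, and colouring that fan entirely white buries the subdivision vertices of $d_x$'s outer edge inside the white region. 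So this case is not a routine splice, and as written the proof does not close.

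The paper sidesteps the collision entirely by moving the handoff one annulus inward: it runs the ring strategy only on $G_{x-1}$, leaving a single white $4$-face in $O_{G_{x-1}}$, and then treats $A_x, A_{x+1},\dots,A_{x+y}$ together as the pyramid phase. This works because $A_x$ is a \emph{ring} annulus, so its interior edges attach to faces of $F^2(O_{G_{x-1}})$, whereas the white door can always be chosen from the $4$-cycle-only partition $F^1(O_{G_{x-1}})$; hence every attachment face at the interface is automatically grey and no residual case can arise (this is what the paper's relabelling of $F^1$ and $F^2$ at the $x$-th annulus is doing). I would recommend adopting that handoff point rather than attempting to repair the two-sided pyramid.
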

\begin{proof}
If $x=1,2$, then we can consider all the annuli as block annuli and use the pr-strategy for finding a Hamiltonian cycle. 
Recall that, as per our convention, the $4$-face-only partition in the outer annulus of an ADB-AC graph $G'$ is termed as $F^1(O_{G'})$. If $x>2$, first we can create a Hamiltonian cycle in $G_{x-1}$ using the ring strategy, and color it's interior grey such that only one $4$-face in $O(G_{x-1})$ is colored in white. Until now, for $x$-th annulus in $G_{x-1}$, the $4$-face-only face partition has been conventionally $F^1(O_{G_{x-1}})$. We alter this convention from this annulus and call them $F^2(O_{G_{x-1}})$. Note that, this allows us to extend the grey region (Hamiltonian cycle) from $x-1$th annulus to the $x+y$th annulus, using the pr-strategy as described in Lemma \ref{strategy2}.   
\end{proof}

From Theorems \ref{blockring} and \ref{ringblock}, it is clear that:
\begin{theorem}\label{arbit}
Every ADB-AC graph $G$, with arbitrary sequence of ring and block annuli is Hamiltonian, if every sequence ring annuli that lies between two sequences of block annuli, has length at least $2$, that is, there are only non singular sequences of ring annuli in $G$.  
\end{theorem}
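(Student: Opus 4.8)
The plan is to induct on the number of \emph{monochromatic runs} of the annular sequence, using Theorems~\ref{blockring} and~\ref{ringblock} as the two-run building blocks. Reading the annuli of $G$ from the inside out, write them as $S_1 S_2\cdots S_t$ with each $S_j$ a maximal block of consecutive annuli of one type and consecutive runs of opposite type; the hypothesis says every $S_j$ that is a ring run flanked by block runs on both sides has $|S_j|\ge 2$. Small or degenerate runs (the innermost two annuli, or a block run too short to contain a pyramid in the sense of Lemma~\ref{strategy2}) are first absorbed into a neighbouring run using the conventions fixed after Theorem~\ref{twocasethm} and inside Theorems~\ref{blockring} and~\ref{ringblock}; I would check that this normalization does not itself create a singular internal ring run. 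The base cases $t=1$ and $t=2$ are then Theorems~\ref{twocasethm},~\ref{blockring} and~\ref{ringblock}.

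For the inductive step I would build the Hamiltonian cycle outward, one run at a time, maintaining the invariant that after processing $S_1,\dots,S_j$ the interior of the annular restriction $G_{k_j}$, $k_j=|S_1|+\cdots+|S_j|$, has been coloured grey so that the boundary of the grey region is a Hamiltonian cycle of $G_{k_j}$, and so that the outermost processed annulus is left in exactly the colouring state needed to launch the strategy appropriate to $S_{j+1}$. A block run is processed by the pr-strategy of Lemma~\ref{strategy2} (which leaves all of $F^2(O_{G_{k_j}})$ grey); a ring run is processed by the ring strategy of Theorem~\ref{twocasethm}, run through all but its last annulus so that exactly one $4$-face of the outermost processed annulus is left white. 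The block$\to$ring transition is then the extension used in Theorem~\ref{blockring} (a ring annulus always has a $4$-cycle attached to a face of $F^2(O_{G_{k_j}})$, which is grey, so the grey region extends into it), and the ring$\to$block transition is the partition-relabelling argument of Theorem~\ref{ringblock} that hands control to the pr-strategy once the ring strategy has run through the run minus its last annulus.

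The hard part will be verifying compatibility at an internal ring run $S_j$, one flanked by block runs on both sides, since such a run must simultaneously receive the grey region coming from the preceding block run and set up the pr-strategy for the following block run. The incoming stitch consumes the first annulus of $S_j$ — it colours a $4$-cycle there grey, thereby fixing which partition of that annulus plays the role of the $4$-cycle partition — while the outgoing stitch needs the last annulus of $S_j$ left in the ``one white $4$-face'' state of Theorem~\ref{ringblock} before its $4$-cycle partition is relabelled so the pr-strategy can begin. For $|S_j|=1$ these two demands fall on a single annulus and clash; the non-singularity hypothesis $|S_j|\ge 2$ is precisely what supplies a spare annulus, letting the first annulus of $S_j$ receive the grey region, the ordinary ring strategy run through the interior annuli of $S_j$, and the last annulus of $S_j$ be reserved for the Theorem~\ref{ringblock} set-up. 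The technical core is checking that, with this allocation, both the grey region and its complement stay path-connected through the double transition; this reduces to re-running the local bookkeeping of Theorems~\ref{blockring} and~\ref{ringblock} in the presence of the other stitch.

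Finally, the outermost run $S_t$ needs no outgoing stitch, so it is finished directly by the pr-strategy or the ring strategy of Theorem~\ref{twocasethm}; the boundary of the final grey region is then the desired Hamiltonian cycle of $G$, and chaining the invariant through all $t$ runs completes the induction.
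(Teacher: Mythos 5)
Your proposal takes essentially the same route as the paper: the paper offers no written proof of Theorem~\ref{arbit} beyond the assertion that it is ``clear'' from Theorems~\ref{blockring} and~\ref{ringblock}, and your induction on maximal runs, stitching block$\to$ring and ring$\to$block transitions via exactly those two theorems, is precisely the chaining the paper leaves implicit. Your writeup is in fact more informative than the paper's, since you explicitly locate where the non-singularity hypothesis is consumed (a singular internal ring run cannot both receive the grey region from the preceding block run and stage the pr-strategy for the following one) and you honestly flag the remaining connectivity bookkeeping at the double transition as the part still to be verified.
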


\subsection*{Open questions}
We propose the following open questions from our study:
\begin{enumerate}
    \item Are all ADB-AC graphs Hamiltonian?
    \item Are all ADB graphs Hamiltonian?
\end{enumerate}

\subsection*{Acknowledgements}
I thank my PhD supervisor Olaf Wolkenhauer for his constant encouragement and unfaltering support for this research. This work was in part supported by funds from Bioinformatics Infrastructure (de.NBI) and Establishment of Systems Medicine Consortium in Germany e:Med , as well as the German Federal Ministry for Education and Research (BMBF) programs (FKZ 01ZX1709C). I thank Miss Shukla Sarkar for her patient support.

\bibliography{Barnette.bib}
\bibliographystyle{plain}

\end{document}